\documentclass[10pt]{article}

\usepackage{amsmath}
\usepackage{amssymb}
\usepackage{amsthm}
\usepackage{graphicx,color}
\usepackage{texdraw}
\textwidth150mm \textheight22cm \topmargin0mm \oddsidemargin4mm

\newtheoremstyle{plain2}{\topsep}{\topsep}%
     {\itshape}
     {}
     {\bfseries}
     {.}
     {.5em}
     {\thmnumber{(#2)}\thmname{ #1}\thmnote{ #3}}

\theoremstyle{plain2}
\newtheorem{teo}{Theorem}[section]
\newtheorem{prop}[teo]{Proposition}
\newtheorem{coro}[teo]{Corollary}
\newtheorem{lemma}[teo]{Lemma}

\newtheoremstyle{definition2}{\topsep}{\topsep}%
     {}
     {}
     {\bfseries}
     {.}
     {.5em}
     {\thmnumber{(#2)}\thmname{ #1}\thmnote{ #3}}

\theoremstyle{definition2}

\newcommand{\sezione}[1]{\section{#1}\setcounter{equation}{0}}

\def\R{\mathbb{R}}

\def\Om{\Omega}
\def\vphi{\varphi}

\begin{document}

\title{\sc Positive constrained minimizers for supercritical problems in the ball}

\author{ Massimo
Grossi\thanks{Dipartimento di Matematica, Universit\`a di Roma
``La Sapienza", P.le A. Moro 2 - 00185 Roma, e-mail: {\sf
grossi@mat.uniroma1.it}} \and Benedetta Noris\thanks{Dipartimento di Matematica e Applicazioni, Universit\`a degli Studi di Milano-Bicocca, Via Bicocca degli Arcimboldi 8 - 20126 Milano, e-mail: {\sf benedettanoris@gmail.com}}}
\date{}
\maketitle

\begin{abstract}
We provide a sufficient condition for the existence of a positive solution to
$$
-\Delta u+V(|x|) u=u^p \quad\hbox{ in } B_1,
$$
when $p$ is large enough. Here $B_1$ is the unit ball of $\R^n$, $n\ge2$, and we deal both with Neumann and Dirichlet homogeneous boundary conditions.
The solution turns to be a constrained minimum of the associated energy functional.
As an application we show that, in case $V(|x|)\geq 0$, $V\not\equiv0$ is smooth and $p$ is sufficiently large, the Neumann problem always admits a solution.
\end{abstract}

\sezione{Introduction}
In this paper we study the existence of radial solutions to the following equation,
\begin{equation}\label{eq:main_nonlinear_equation}
\left\{ \begin{array}{ll}
-\Delta u+V(|x|) u=u^p \quad &\text{ in } B_1 \\
u>0 \quad &\text{ in } B_1,
\end{array}\right.
\end{equation}
both with Neumann and Dirichlet homogeneous boundary conditions. Here $B_1$ is the unit ball of $\R^n$, $n\geq2$, and $V(|x|)\geq0, V\not\equiv0$ is a smooth, radial function. We are interested when the exponent $p$ is large. Recent results (see \cite{G2,C}) suggest that the existence of solutions of \eqref{eq:main_nonlinear_equation} is related to the critical points of a function $F(r)$, associated to this equation in the limit as $p\to+\infty$ (see \eqref{eq:definition_F} below). Our aim is to extend the known existence results in this direction through a better understanding of $F(r)$.

In order to be more precise, let us start by considering Neumann boundary conditions. We denote by $G(r,s)$ the Green function of the operator
\begin{equation}\label{eq:linear_radial_operator}
{\cal L}u=-u''-\frac{n-1}ru'+V(r)u, \qquad u'(0)=0
\end{equation}
with the Neumann boundary condition $u'(1)=0$. Note that, unlike the case of higher dimensions, the Green function is bounded, hence $G(r,r)$ makes sense and we can define
\begin{equation}\label{eq:definition_F}
F(r)= \frac{|\partial B_1| r^{n-1}}{G(r,r)}, \quad r\in(0,1].
\end{equation}
Here $|\partial B_1|$ is the measure of the boundary of the unit sphere.
$F(r)$ represents the energy naturally associated to the normalized Green function $G(\cdot,r)/G(r,r)$ (see Lemma \ref{lemma:relation_F_Q}). Our first result concerns the existence of solutions for the Neumann problem.
\begin{teo}\label{teo:main_theorem}
Let us consider the problem
\begin{equation}\label{eq:main_nonlinear_equation_neumann}
\left\{ \begin{array}{ll}
-\Delta u+V(|x|) u=u^p \quad &\text{ in } B_1 \\
u>0 \quad &\text{ in } B_1,\\
\frac{\partial u}{\partial\nu}=0&\text{ on }\partial B_1
\end{array}\right.
\end{equation}
where $V(|x|)\geq0, V\not\equiv0$ is a smooth, radial function in $B_1$, such that $F(r)$ admits a local minimum point at $\overline r\in(0,1]$. Then, for $p$ large enough, there exists a radial solution $u_p=u_p(r)$ to \eqref{eq:main_nonlinear_equation_neumann} which verifies
\begin{equation}\label{5}
u_p(r)\to \frac{G(r,\overline r)}{G(\overline r,\overline r)} \quad \text{ as } p\to+\infty \qquad \text{ in } H^1(B_1)\cap L^\infty(B_1).
\end{equation}
\end{teo}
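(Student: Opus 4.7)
The plan is variational: seek $u_p$ as a constrained minimum of the Rayleigh-type quotient
\[
Q_p(u)=\frac{\int_{B_1}\bigl(|\nabla u|^2+V(|x|)u^2\bigr)\,dx}{\bigl(\int_{B_1} |u|^{p+1}\,dx\bigr)^{2/(p+1)}}, \qquad u\in H^1_r(B_1)\setminus\{0\},
\]
whose critical points correspond (after multiplication by a constant) to radial solutions of \eqref{eq:main_nonlinear_equation_neumann}. Since $p$ is supercritical, $Q_p$ need not attain its infimum on the whole space, and the standard Strauss/Palais compactness for radial functions is not by itself enough in $L^{p+1}$ for $p$ large. The idea is to impose a geometric side condition that both restores compactness for minimizing sequences and pins down the ``location'' $\bar r$.

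Concretely, fix a closed interval $I\subset(0,1]$ with $\overline r$ in its interior such that $F(\overline r)=\min_I F$ and $F(\overline r)<F(r)$ on $\partial I$; the existence of such an $I$ uses the local minimality assumption. Define
\[
m_p=\inf\bigl\{Q_p(u)\,:\,u\in H^1_r(B_1),\ u\ge0,\ \|u\|_\infty=u(r_0)\text{ for some }r_0\in I\bigr\}.
\]
The first step is to show that $m_p$ is attained by some positive radial $u_p$: the radial symmetry together with the pointwise constraint localizing $\|u\|_\infty$ inside $I$ (hence away from $r=0$) give a uniform a priori $L^\infty$ bound on a suitable rescaling, and produce enough compactness for minimizing sequences. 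Next, using the normalized Green function $G(\cdot,\overline r)/G(\overline r,\overline r)$ as a test function, Lemma \ref{lemma:relation_F_Q} and an asymptotic computation yield $m_p\to F(\overline r)$ as $p\to+\infty$. Combined with the strict inequality on $\partial I$, this forces the maximum point $r_p$ of $u_p$ into the interior of $I$ for $p$ large, so the constraint becomes inactive and $u_p$ is a free critical point of $Q_p$ on $H^1_r(B_1)$. A Lagrange-multiplier/rescaling argument then produces the desired solution to \eqref{eq:main_nonlinear_equation_neumann}, and the convergence \eqref{5} follows from passing to the limit in the equation and identifying the unique radial limit with the normalized Green function at $\overline r$.

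The main obstacle is the existence step: because $p$ is supercritical, $L^{p+1}$-convergence of minimizing sequences is not granted by radial $H^1$-boundedness alone, and one has to exploit the localization of $\|u\|_\infty$ inside $I$ to exclude concentration at the origin and at the boundary. A closely related delicate point is showing that the constraint is eventually inactive: this requires a uniform-in-$p$ gap between $F(\overline r)$ and $F$ restricted to $\partial I$, which is precisely what the local-minimum hypothesis on $F$ delivers, and explains why the statement cannot be strengthened to an arbitrary critical point of $F$ without additional work.
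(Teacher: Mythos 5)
Your overall strategy is the same as the paper's: minimize the energy under a constraint that localizes the profile near $\overline r$, use the normalized Green function as a competitor together with Lemma \ref{lemma:relation_F_Q} to show the constrained level tends to $F(\overline r)$, invoke the strict local minimality of $F$ to pin the concentration point, and finally argue that the constraint is asymptotically inactive so that a Lagrange multiplier gives the equation. The genuine gap is in that last step, and it is precisely where your choice of constraint hurts you. The set $\{u:\ \|u\|_\infty=u(r_0)\ \text{for some }r_0\in I\}$ is neither convex nor open in any topology controlled by $H^1_r(B_1)$: for $n\ge 2$ radial $H^1$ functions are not bounded near the origin, and $H^1$-small perturbations can create spikes at $r=0$ that move the maximum out of $I$. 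Knowing only that the maximum point $r_p$ lies in the interior of $I$ does \emph{not} make $u_p$ a free critical point; to run even the admissible one- and two-sided variations you need a quantitative gap $\sup_{B_1\setminus I}u_p\le \|u_p\|_\infty-\delta_p$, and in particular uniform smallness of $u_p$ near the origin. Your constraint provides no a priori control there (the supremum of $u_p$ on $B_\epsilon$ could be arbitrarily close to $\|u_p\|_\infty$ without being attained), and proving such control requires the Euler--Lagrange relation you are trying to derive -- a circularity. The paper avoids this by taking the constraint in obstacle form, $|u|\le c$ on $B_{R_1}\cup(B_1\setminus B_{R_2})$ with $c$ as in \eqref{eq:property_c}: one-sided variations \`a la Passaseo give the variational inequality \eqref{b21} and the equation \eqref{b22} in the annulus; then the multipliers $\lambda_p$ are bounded via the boundary-flux estimate \eqref{b22bis}; then the elliptic estimate \eqref{b25} upgrades the convergence to $L^\infty(B_1)$ \emph{including near the origin}; and only a posteriori does the strict inequality $u_p<c$ in the obstacle region free the constraint. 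These three ingredients (Euler--Lagrange inequality from one-sided variations, uniform bound on $\lambda_p$, uniform $L^\infty$ control up to $r=0$) are the non-formal core of the proof and are absent from, or incompatible with, your outline as stated.

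A second, smaller gap: the theorem allows $\overline r=1$ (and this case yields Theorem \ref{teo:Neumann_bar_r=1}), but your construction requires an interval $I\subset(0,1]$ containing $\overline r$ in its interior, which is impossible for $\overline r=1$. The paper treats $\overline r=1$ separately in Section \ref{sec:proof_teo_Neumann_bar_r=1}, where one must first define $G(\cdot,1)$ as a limit and prove the analogue of Lemma \ref{lemma:relation_F_Q} at $r=1$ (Lemma \ref{lemma:relation_F_Q_generalization}, using $\zeta'(1)=0$ and \eqref{eq:xi'zeta-xizeta'}); your test-function computation for the upper bound $m_p\to F(\overline r)$ would need this extension before it makes sense in that case. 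Finally, note that your existence and identification steps (compactness of minimizing sequences under the max-location constraint, identification of the limit with the normalized Green function) are plausible but are exactly the content of Proposition \ref{prop:existence_u_p}, Lemmas \ref{lemma:weak_convergence_u_p}--\ref{lemma:sequence_w_p} and Proposition \ref{prop:description_of_u_infty}, so they would need to be carried out in detail rather than asserted.
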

We point out that we find as many different solutions as the number of local minimum points of $F(r)$ in $(0,1]$.\\
Note that if $\bar r=1$, we define $G(r,1)$ as the punctual limit of $G(r,s)$ as $s\to 1$ (see Section \ref{sec:proof_teo_Neumann_bar_r=1}).
Moreover, being $\bar r=1$ always a local minimum point of $F(r)$ (see Section \ref{sec:proof_teo_Neumann_bar_r=1}) we deduce the following result,
\begin{teo}\label{teo:Neumann_bar_r=1}
Let $V(|x|)\geq0, V\not\equiv0$ be a smooth, radial function in $B_1$.
Then, for $p$ large enough, there exists a radial solution $u_p=u_p(r)$ to \eqref{eq:main_nonlinear_equation_neumann} which converges to $G(r,1)/G(1,1)$.
\end{teo}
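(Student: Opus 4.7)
The strategy is to deduce Theorem \ref{teo:Neumann_bar_r=1} from Theorem \ref{teo:main_theorem} applied with $\bar r = 1$, after establishing two preliminary facts that are specific to the boundary case: (i) a proper definition of $G(r,1)$, since \eqref{eq:linear_radial_operator} places the source at an interior point; and (ii) the claim, asserted by the authors to hold unconditionally, that $r=1$ is always a local minimum point of $F(r)$ on $(0,1]$.

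For (i), I would define $G(r,1):=\lim_{s\to 1^-}G(r,s)$ and verify that this pointwise limit exists. Writing $G(\cdot,s)$ as a linear combination of two independent solutions of the homogeneous equation ${\cal L}u=0$, with coefficients determined by the Neumann condition at $r=0$, the Neumann condition at $r=1$ and the jump relation at $r=s$, the coefficients depend continuously on $s$ up to $s=1$. Passing to the limit yields a function $G(\cdot,1)$ solving ${\cal L}G(\cdot,1)=0$ on $(0,1)$, with $G_r(0,1)=0$ and a nonhomogeneous Neumann condition at $r=1$ absorbing the boundary-concentrated source. In particular $F$ extends continuously to $r=1$, and $G(r,1)/G(1,1)$ is the natural limiting profile predicted by \eqref{5}.

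For (ii), the variational characterization of $F$ in Lemma \ref{lemma:relation_F_Q} identifies $F(r)$ with an energy attached to the normalized Green function $G(\cdot,r)/G(r,r)$. A local expansion of this energy as $r\to 1^-$, exploiting the ODE satisfied by $G(\cdot,r)$ together with the homogeneous Neumann condition $G_r(1,r)=0$, should exhibit a non-negative leading contribution. The geometric reason behind the universal nature of this minimality is that the Neumann condition amounts to an even reflection across $\partial B_1$, which forces $r=1$ to be a stationary point of $F$ regardless of the potential $V$; the sign of the next-order term then yields that the stationary point is a minimum.

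Once (i) and (ii) are in hand, the conclusion is obtained by invoking Theorem \ref{teo:main_theorem} at $\bar r=1$. The main obstacle I foresee is that some steps in the proof of Theorem \ref{teo:main_theorem} may implicitly use that $\bar r$ lies in the open interval $(0,1)$, for example in the construction of radial test functions peaking at $\bar r$ on both sides, or in a second-order Taylor expansion of $F$ around the interior critical point. These arguments must be replaced by their one-sided analogues in a left neighborhood of $r=1$, which should cause no real difficulty thanks to the reflection symmetry provided by the Neumann boundary condition.
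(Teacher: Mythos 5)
Your outline follows the same global strategy as the paper (define $G(r,1)$ as the limit of $G(r,s)$ as $s\to1$, prove that $r=1$ is always a local minimum of $F$, then rerun the constrained minimization of Theorem \ref{teo:main_theorem} with one-sided modifications), but the decisive step (ii) is both unproved and based on a false premise. You assert that the Neumann condition makes $r=1$ a \emph{stationary} point of $F$ by even reflection, and that minimality then comes from the sign of a second-order term. This is not what happens. Using the factorization of Lemma \ref{lemma:appendix_factorization_of_G}, $G(r,r)=r^{n-1}\xi(r)\zeta(r)$, so $F(r)=|\partial B_1|/(\xi(r)\zeta(r))$ and $F'(r)=-|\partial B_1|\,(\xi'\zeta+\xi\zeta')/(\xi\zeta)^2$. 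At $r=1$ the boundary condition gives $\zeta'(1)=0$, while the Wronskian identity \eqref{eq:xi'zeta-xizeta'} gives $\xi'(1)\zeta(1)=1\neq0$; hence $\lim_{r\to1^-}F'(r)=-|\partial B_1|\,\bigl(\xi'(1)\zeta(1)\bigr)^2/\bigl(\xi(1)\zeta(1)\bigr)^2<0$. So the one-sided first derivative does \emph{not} vanish: $F$ is strictly decreasing as $r\to1^-$, and $r=1$ is a local minimum only because it is the right endpoint of $(0,1]$ --- a first-order effect, not a second-order one. Your phrase that the expansion ``should exhibit a non-negative leading contribution'' is exactly the claim that constitutes the new content of Theorem \ref{teo:Neumann_bar_r=1} beyond Theorem \ref{teo:main_theorem}, and it is left without any computation; as sketched (vanishing first variation plus signed second variation) the argument would not close, whereas the correct verification requires precisely the structure $\zeta'(1)=0$ together with \eqref{eq:xi'zeta-xizeta'} (or an equivalent computation).

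There is also a smaller omission: to run the machinery of Sections \ref{sec:existence_convergence_minimizers}--\ref{sec:proof_main_theorem} at $\bar r=1$ you need the energy identity $F(1)=Q\bigl(G(\cdot,1)/G(1,1)\bigr)$, i.e.\ the extension of Lemma \ref{lemma:relation_F_Q} to the endpoint, because the comparison argument in Proposition \ref{prop:description_of_u_infty} hinges on it and the proof of Lemma \ref{lemma:relation_F_Q} (multiplying the equation with the $\delta_r$ source by $G(\cdot,r)$) degenerates when the source sits on the boundary. The paper handles this in Lemma \ref{lemma:relation_F_Q_generalization} by observing $G(r,1)/G(1,1)=\xi(r)/\xi(1)$, integrating by parts to get $\int_0^1[(\xi')^2+V\xi^2]r^{n-1}dr=\xi'(1)\xi(1)$, and using again $\zeta'(1)=0$ and \eqref{eq:xi'zeta-xizeta'}. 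Finally, the constraint set must be modified to require $|u|\le c$ only on $B_{R_1}$ (there is no outer annulus when $\bar r=1$); this is in the spirit of your ``one-sided analogues'' remark, and your step (i) on defining $G(r,1)$ as a pointwise limit is consistent with the paper.
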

In particular, from \eqref{5}, we derive the following new existence result for the problem with constant potential.
\begin{coro}\label{i2}
Let $V(|x|)\equiv\lambda>0$.
Then, for $p$ large enough, there exists a nonconstant radial solution $u_p=u_p(r)$ to \eqref{eq:main_nonlinear_equation_neumann}.
\end{coro}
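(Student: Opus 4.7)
The plan is to apply Theorem \ref{teo:Neumann_bar_r=1} to $V(|x|)\equiv\lambda$ (smooth, nonnegative, not identically zero), which immediately produces a radial solution $u_p$ of \eqref{eq:main_nonlinear_equation_neumann} converging to $G(r,1)/G(1,1)$ in $H^1(B_1)\cap L^\infty(B_1)$. Existence and radiality are then free; what must be checked is that $u_p$ is nonconstant for $p$ sufficiently large.

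I would observe that the only positive constant function solving $-\Delta u+\lambda u=u^p$ is $c_p:=\lambda^{1/(p-1)}$, and that $c_p\to 1$ as $p\to+\infty$. Consequently, if $u_p$ coincided with $c_p$ along a sequence $p_k\to+\infty$, the $L^\infty$ convergence in \eqref{5} would force the limit profile to satisfy $G(\cdot,1)/G(1,1)\equiv 1$ on $[0,1]$. I would then rule out this identity by a short ODE argument: since $G(\cdot,s)$ solves $\mathcal{L}G(\cdot,s)=0$ on $(0,1)\setminus\{s\}$ with $G'(0,s)=0$, the pointwise limit $G(\cdot,1)$ (whose construction is recalled in Section \ref{sec:proof_teo_Neumann_bar_r=1}) solves $\mathcal{L}G(\cdot,1)=0$ on $(0,1)$. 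If this function were equal to the constant $G(1,1)$, substituting into $\mathcal{L}u=0$ would give $\lambda\,G(1,1)=0$, contradicting $\lambda>0$ together with the positivity $G(1,1)>0$ of the Green function of the coercive Neumann operator $\mathcal{L}$ with $V\equiv\lambda>0$.

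Putting the two facts together, $u_p$ and $c_p$ are bounded away from each other in $L^\infty(B_1)$ for all large $p$, so $u_p\not\equiv c_p$ and hence $u_p$ is nonconstant. The only genuinely substantive step is the exclusion of the identity $G(\cdot,1)\equiv G(1,1)$, which hinges on the strict positivity of $\lambda$ and on the positivity of the Green function; everything else is a direct application of Theorem \ref{teo:Neumann_bar_r=1} and of the explicit form of the constant solutions.
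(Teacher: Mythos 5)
Your proposal is correct and follows essentially the route the paper intends: the paper derives Corollary \ref{i2} directly from the convergence \eqref{5} (with $\bar r=1$), the point being that the limit profile $G(\cdot,1)/G(1,1)$ is nonconstant, so $u_p$ cannot agree with the constant solution $\lambda^{1/(p-1)}$ for large $p$. Your extra details (identifying the constant solutions and excluding $G(\cdot,1)\equiv G(1,1)$ via the equation ${\cal L}G(\cdot,1)=0$ and $\lambda>0$, $G(1,1)>0$) correctly fill in what the paper leaves implicit.
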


These results continue the study of the supercritical case started in \cite{G1} and \cite{G2}, with zero Dirichlet boundary conditions and $p$ large. However, here we have some important news. The first one concerns the technique used in the proof of Theorem \ref{teo:main_theorem}. In \cite{G2}, a crucial point in the construction of the solution was given by the following "limit problem",
\begin{equation}\label{6}
-U''=e^U\quad\hbox{in }\R
\end{equation}
and by the corresponding linearized equation. The solution was then found "close" to a projection of a suitable solution to \eqref{6}. This approach is quite standard in this type of problems (there is a very wide literature on the topic), but involves heavy calculations. In the proof of Theorem \ref{teo:main_theorem} we do not use the limit problem (\ref{6}) but we find the solution using some suitable constrained variational approach. Note that a similar idea was used in \cite{P} to handle the supercritical problem. This technique, in the opinion of the authors, makes the proofs much simpler. Moreover, we think that similar ideas could be used in analogous problems with lack of compactness. Another important advantage resulting from this technique is that it does not require any non-degeneracy assumption on the minimum point $\overline r$, hypothesis which is not easy to verify.

As it concerns Dirichlet boundary conditions, we partially recover, through this different technique, the results in \cite{G2}. In fact the analogous of Theorem \ref{teo:main_theorem} holds, in the following form.
\begin{teo}\label{i3}
Let us consider the problem
\begin{equation}\label{i4}
\left\{ \begin{array}{ll}
-\Delta u+V(|x|) u=u^p \quad &\text{ in } B_1 \\
u>0 \quad &\text{ in } B_1,\\
u=0&\text{ on }\partial B_1
\end{array}\right.
\end{equation}
where $V(|x|)\geq0, V\not\equiv0$ is a smooth, radial function in $B_1$, such that $F(r)$ admits a local minimum point at $\overline r\in(0,1)$. Then, for $p$ large enough, there exists a radial solution $u_p=u_p(r)$ to \eqref{i4} which verifies
\begin{equation}\label{i5}
u_p(r)\to \frac{G(r,\overline r)}{G(\overline r,\overline r)} \quad \text{ as } p\to+\infty \qquad \text{ in } H^1(B_1)\cap L^\infty(B_1).
\end{equation}
\end{teo}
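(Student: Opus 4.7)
The plan is to adapt the constrained variational scheme used in the proof of Theorem \ref{teo:main_theorem} to the Dirichlet setting. Since $\bar r\in(0,1)$, we may fix radii $0<a<\bar r<b<1$ with $F(\bar r)<F(r)$ for every $r\in[a,b]\setminus\{\bar r\}$. The hypothesis $\bar r<1$ is essential because the Dirichlet Green function $G(\cdot,\bar r)$ vanishes on $\partial B_1$ and can only serve as a concentration profile when $\bar r$ is strictly interior; this is the only structural difference with the Neumann argument.

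I would work in the radial Dirichlet space $H^1_{0,r}(B_1)$ and, for each $r\in[a,b]$, introduce
$$
c_p(r)=\inf\Bigl\{\int_{B_1}\bigl(|\nabla u|^2+V(|x|)u^2\bigr)\,dx\ :\ u\in H^1_{0,r}(B_1),\ u\ge 0,\ \|u\|_{L^{p+1}(B_1)}=1,\ u(r)=\|u\|_\infty\Bigr\},
$$
and set $c_p=\inf_{r\in[a,b]}c_p(r)$. The pointwise constraint $u(r)=\|u\|_\infty$ localizes the concentration at the radius $r$; by continuity of radial $H^1$ functions off the origin and the compact embedding of $H^1_{0,r}(B_1)$ into $L^{p+1}(B_1)$, the constrained set is weakly closed and $c_p$ is attained by some nonnegative $u_p$ whose peak is at some $r_p\in[a,b]$. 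If one shows that $r_p$ lies in $(a,b)$ for $p$ large, then the localization constraint is inactive, and a Lagrange multiplier argument together with the rescaling $\tilde u_p=\lambda_p^{1/(p-1)}u_p$ turns $u_p$ into a genuine positive radial solution of \eqref{i4}; strict positivity then follows from the strong maximum principle.

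The main obstacle is establishing the asymptotic
$$
\lim_{p\to+\infty}c_p(r)=F(r)\qquad\text{uniformly for }r\in[a,b],
$$
which, via the strict local minimality of $F$ at $\bar r$, yields simultaneously $r_p\to\bar r\in(a,b)$ and the $H^1\cap L^\infty$ convergence \eqref{i5}. The upper bound $c_p(r)\le F(r)+o(1)$ follows by testing with the normalized Green function $G(\cdot,r)/G(r,r)$, which belongs to $H^1_{0,r}(B_1)$ because $r<1$, using Lemma \ref{lemma:relation_F_Q} to identify its energy with $F(r)$, and observing that its $L^{p+1}$ norm tends to $1$. The matching lower bound demands a concentration analysis: along $p_k\to+\infty$ and $r_k\to r^*\in[a,b]$, minimizers $v_k$ converge weakly to a function $v^*$ whose limiting Euler--Lagrange equation is linear, with a Dirac source at $r^*$ and zero Dirichlet data on $\partial B_1$; since $r^*\in(0,1)$, uniqueness for the Green problem identifies $v^*$ with $G(\cdot,r^*)/G(r^*,r^*)$ and its energy with $F(r^*)$. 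Once this two-sided estimate is in hand, the remainder of the proof parallels the Neumann case word for word.
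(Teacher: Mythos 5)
Your overall strategy (constrained minimization, convergence of the constrained level to $F$, identification of the limit with the normalized Green function) is in the spirit of the paper, but your constraint structure is different from the one the paper actually uses, and as written there are genuine gaps. The paper proves Theorem \ref{i3} verbatim as Theorem \ref{teo:main_theorem}: it keeps the $L^{p+1}$ normalization and adds the \emph{fixed} obstacle $|u|\le c$ on $B_{R_1}\cup(B_1\setminus B_{R_2})$ with $c$ as in \eqref{eq:property_c}. Your first gap is the appeal to ``the compact embedding of $H^1_{0,r}(B_1)$ into $L^{p+1}(B_1)$'' to get attainment of $c_p(r)$: for supercritical $p$ this embedding is false -- radial symmetry does not prevent concentration at the origin, and this is exactly why the unconstrained infimum $S_p$ in \eqref{2} is zero and why the paper imposes $|u|\le c$ in $B_{R_1}$. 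Your peak constraint $u(r)=\|u\|_\infty$ with $r\ge a>0$ could rescue compactness (the Strauss-type radial estimate gives $\|u\|_\infty=u(r)\le C(a)\|u\|_{H^1}$, and then dominated convergence handles the $L^{p+1}$ constraint), but that is not the argument you gave, and the misstep occurs precisely at the crux of the supercritical difficulty.

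Second, the claim ``if $r_p\in(a,b)$ for $p$ large, the localization constraint is inactive'' is not correct as stated: your constraint is $u\le u(r_p)$ on all of $B_1$, and the interior position of the peak does not free the minimizer from it. What is needed is a strict gap $\sup_{B_a\cup(B_1\setminus B_b)}u_p<\|u_p\|_\infty$, i.e.\ uniform $L^\infty$ control of $u_p$ up to the origin; in the paper this is the content of \eqref{b21}--\eqref{b27} and requires a Passaseo-type variational inequality, a bound on the Lagrange multiplier $\lambda_p$ (obtained by a specific choice of intermediate radii and the concentration of the $L^{p+1}$ mass), and the elliptic estimate \eqref{b25}. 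None of this transfers ``word for word'' to your setting, because your admissible set and hence the Euler--Lagrange analysis are different. Third, your lower bound $c_p(r)\ge F(r)-o(1)$ is justified by asserting that $\lambda_p u_p^{p}$ concentrates as a Dirac mass at $r^*$ with the correct normalization; this presupposes exactly the multiplier bounds you have not established, and it is also unnecessary: weak lower semicontinuity plus the facts $0\le v^*\le 1$, $v^*(r^*)=1$ (which follow from $\|v_k\|_{L^{p_k+1}}=1$, interpolation, and local uniform convergence away from the origin) reduce the lower bound to the auxiliary obstacle problem \eqref{b5}, whose value is $F(r^*)$ by Lemma \ref{lemma:relation_F_Q}; this is how the paper identifies the limit in Proposition \ref{prop:description_of_u_infty}. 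With these repairs your peak-location scheme could likely be completed, but as written the existence step rests on a false embedding, the removal of the constraint is unjustified, and the key concentration/multiplier estimates are missing.
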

Of course, in this case, the term $G(r,s)$ appearing in \eqref{eq:definition_F} and \eqref{i5} is the Green function of the operator \eqref{eq:linear_radial_operator} with the Dirichlet boundary condition $u(1)=0$. The proof of Theorem \ref{i3} is the same as the one of Theorem \ref{teo:main_theorem} (it is even easier because we do not need to analyze the case $\bar r=1$), for this reason we omit it. Actually, our technique gives a unified proof for both Dirichlet and Neumann boundary conditions. Moreover, again in the Dirichlet case, Catrina proved in \cite{C} that the condition in Theorem \ref{i3} is "almost" necessary. Indeed he proved the following result.
\begin{teo}\label{i6}
If the function $F_p(r)=\frac{r^{\frac{p-1}{p+3}(n-1)}}{G(r,r)}$ is monotonic, nonconstant, then
problem \eqref{i4} has no solution.
\end{teo}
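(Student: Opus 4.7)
The plan is to derive an integral identity relating a positive radial solution $u$ of \eqref{i4} to the sign of $F_p'$, so that strict monotonicity of $F_p$ precludes such $u$. The two ingredients I would combine are the Green function representation and a weighted Pohozaev-type identity. Since $\mathcal{L}u=u^p$ with $u(1)=0$ and $u'(0)=0$, one has
\[
u(r)=|\partial B_1|\int_0^1 G(r,s)\,u(s)^p\,s^{n-1}\,ds, \qquad r\in[0,1],
\]
so that $u(r)/G(r,r)$ is a weighted average of $u^p$ against the normalized kernel $G(r,\cdot)/G(r,r)$; this is the first ingredient and links $u$ pointwise to $G$.

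Next I would multiply the radial ODE by $r^{\gamma}u'(r)\,r^{n-1}$, integrate by parts on $(0,1)$ using the boundary data, and obtain a weighted Rellich-type identity linking $\int (u')^2 r^{n+\gamma-2}$, $\int u^{p+1} r^{n+\gamma-2}$, the potential terms $\int[(n+\gamma-1)V+rV']u^2 r^{n+\gamma-2}$, and $u'(1)^2$, for a free parameter $\gamma$. Coupling this with the standard energy identity $\int (u')^2 r^{n-1}+\int V u^2 r^{n-1}=\int u^{p+1}r^{n-1}$ (obtained by testing against $u\,r^{n-1}$) allows elimination of the gradient term; inserting the representation formula then re-expresses the weighted $\int u^{p+1}$ via $G$. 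Choosing $\gamma$ so that the potential contributions ($V$ and $rV'$) drop out from the combined identity fixes a unique value, and the surviving relation takes the form
\[
\int_0^1 F_p'(r)\,\Psi[u](r)\,dr \;=\; c\,u'(1)^2,
\]
with $\Psi[u]\ge 0$, strictly positive where $u>0$, and a boundary coefficient $c$ of definite sign. It is precisely this cancellation that dictates the exponent $\frac{(p-1)(n-1)}{p+3}$ appearing in $F_p$. If $F_p$ is strictly monotone and nonconstant, then $F_p'$ has constant sign and is not identically zero, so the left-hand side is nonzero with one sign while the right-hand side has the opposite sign (again determined by the direction of monotonicity), forcing $u\equiv 0$ against positivity.

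The main obstacle is the algebraic bookkeeping that singles out $\gamma$ and produces $F_p'(r)$ as the coefficient of $u^{p+1}$, rather than any other rational combination of $r$-powers and $G(r,r)$: the exponent $(p-1)(n-1)/(p+3)$ is not predicted by elementary scaling of the PDE alone, but emerges only from the interplay of the weighted Pohozaev identity, the energy identity, and the Green representation. Verifying that this delicate cancellation yields a sign-definite $\Psi[u]$ — not merely a signed coefficient that could be compensated by the boundary term — is the crux of the argument.
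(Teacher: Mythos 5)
First, a remark on the comparison itself: the paper does not prove Theorem \ref{i6}; it is quoted from Catrina \cite{C}, so the only meaningful comparison is with the argument your plan would have to reproduce.

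Your plan has a genuine gap at its central step. You propose to test the equation with $r^{\gamma}u'r^{n-1}$, combine with the energy identity, and ``choose $\gamma$ so that the potential contributions ($V$ and $rV'$) drop out.'' For a general potential $V\geq 0$ this cancellation is impossible: the weighted Pohozaev identity produces a term of the form
\begin{equation*}
\int_0^1\bigl[(n+\gamma-1)V(r)+rV'(r)\bigr]u^2\,r^{n+\gamma-2}\,dr,
\end{equation*}
and no single constant $\gamma$, nor any constant-coefficient combination with $\int_0^1 V u^2 r^{n-1}dr$ from the energy identity, can annihilate the functionally independent term $rV'$ pointwise. Consequently the claimed identity $\int_0^1 F_p'(r)\Psi[u](r)\,dr=c\,u'(1)^2$ is never actually produced: $F_p'$ involves $\frac{d}{dr}G(r,r)=\frac{d}{dr}\bigl[r^{n-1}\xi(r)\zeta(r)\bigr]$ (Lemma \ref{lemma:appendix_factorization_of_G}), i.e.\ derivatives of the homogeneous solutions $\xi,\zeta$, and this information cannot be generated by power weights $r^\gamma$; inserting the Green representation, being nonlocal, yields a double integral in $(r,s)$, not a local sign-definite density $\Psi[u]$ multiplying $F_p'$. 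There is also a problem at the endgame even if such an identity existed: $u'(1)^2\geq 0$ has a fixed sign, so one identity of that shape could at best exclude one direction of monotonicity, whereas the theorem excludes both.

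The cancellation you are looking for is achieved not with powers of $r$ but with the solutions $\xi,\zeta$ of Lemma \ref{z} themselves (they encode $V$ exactly). Concretely, setting $W=\xi\zeta$, $dt/dr=1/(r^{n-1}W)$ and $u=W^{1/2}v(t)$, the Wronskian relation \eqref{eq:xi'zeta-xizeta'} makes all $V$-terms collapse, and the radial equation becomes
\begin{equation*}
-\ddot v+\tfrac14\,v=q(t)\,v^p,\qquad q(t)=r^{2(n-1)}\bigl(\xi\zeta\bigr)^{\frac{p+3}{2}}=F_p\bigl(r(t)\bigr)^{-\frac{p+3}{2}},
\end{equation*}
so that monotonicity of $F_p$ is exactly monotonicity of the coefficient $q$; the exponent $\frac{(p-1)(n-1)}{p+3}$ arises from this computation and from $G(r,r)=r^{n-1}\xi(r)\zeta(r)$, not from any scaling of the PDE. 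Nonexistence then follows from the one-dimensional energy $E=\frac12\dot v^2-\frac18 v^2+\frac{q}{p+1}v^{p+1}$, whose derivative is $\dot q\,v^{p+1}/(p+1)$: using the boundary behaviour at both ends (coming from $u'(0)=0$ and $u(1)=0$) one gets $E$ vanishing at both endpoints, forcing $\dot q\equiv 0$ on the support of $v$ and contradicting the assumption that $F_p$ is nonconstant. This (or an equivalent multiplier identity built from $\xi$ and $\zeta$) is the mechanism behind Catrina's theorem; your proposal, as written, does not reach it.
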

Since $F_p\rightarrow F$ uniformly in any interval $[r_0,1]$ as $p\rightarrow+\infty$, we have that the existence of a minimum to the function $F$ becomes "almost" necessary for the existence of a solution.

We end this section with a brief history of the problem \eqref{eq:main_nonlinear_equation}. First, if $1<p<\frac{n+2}{n-2}$ for $n\ge3$ ({\em subcritical case}) and $p>1$ if $n=2$, it is not difficult to prove the existence of a solution. This can be shown observing that the following infimum
\begin{equation}\label{2}
S_p=\inf\left\{\int_\Om\left(|\nabla u|^2+V(x) u^2\right):\ \int_\Om|u|^{p+1}=1,\ u\in H^1(\Om)
\right\},
\end{equation}
is achieved because the compact embedding of $H^1(\Omega)$ in $L^{p+1}(\Omega)$ (the same holds for $H^1_0(\Omega)$).

If $p=\frac{n+2}{n-2}$ for $n\ge3$ ({\em critical case}), it is well known that the existence of a solutions to \eqref{eq:main_nonlinear_equation} depends on the shape of $\Om$ and on the properties of $V(x)$. Since there is a huge litarature on this topic we just mention the pioneering papers by Brezis and Nirenberg \cite{BN} and Bahri and Coron \cite{BC} for the Dirichlet case.
The Neumann  problem \eqref{eq:main_nonlinear_equation} was first studied when $V(x)$ is a positive constant $\lambda$ and some existence results were established in \cite{N, CK91,AY91,AY93,NPT,APY}. See also \cite{AM,AY} for the case where $V$ is not constant.

The {\em supercritical case} $p>\frac{n+2}{n-2}$ is much more difficult to handle since there is no embedding of $H^1(\Omega)$ in $L^{p+1}(\Omega)$.
A consequence of this fact is that the infimum $S_p$ in \eqref{2} is $zero$ and hence it can not be used to find a solution to \eqref{eq:main_nonlinear_equation}.
Some interesting existence and nonexistence results in special domains with Dirichlet boundary conditions are due to Passaseo (\cite{P,P1}). We also mention the recent paper \cite{DW}, concerning domains with small circular holes.
We emphasize that the case of a general domain seems not yet fully understood. Regarding the case of the ball, to our knowledge the only results are those of \cite{G2} (already mentioned above) and \cite{MP,MPS} (here $V$ is constant).

Unlike the Dirichlet case, where in recent years there have been several developments, in the Neumann case there is a very poor literature.
To our knowledge the only results in the supercritical case are due to Ni (see \cite{N}) and Lin-Ni (see \cite{LN}). In particular the authors prove the following.
\begin{teo}\label{i7}
Let us consider the problem,
\begin{equation}\label{i8}
\left\{ \begin{array}{ll}
-\Delta u+\lambda u=u^p \quad &\text{ in } B_1 \\
u>0 \quad &\text{ in } B_1,\\
\frac{\partial u}{\partial\nu}=0&\text{ on }\partial B_1
\end{array}\right.
\end{equation}
Then, there exist positive constants $\lambda_0=\lambda_0(n,p)$ and $\lambda_1=\lambda_1(n)$ such that\\
i) for any $\lambda>\lambda_1$ there exists at least a nonconstant radial solution to \eqref{eq:main_nonlinear_equation_neumann},\\
ii) for any $\lambda<\lambda_0$ \eqref{eq:main_nonlinear_equation_neumann} does not admit any nonconstant radial solution.
\end{teo}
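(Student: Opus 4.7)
I would handle the two parts by completely different techniques: existence for large $\lambda$ via rescaling and a constrained minimization on a large ball, nonexistence for small $\lambda$ via a Sturm--Liouville identity for $u'$ coupled with an a priori bound on $\|u\|_\infty$.

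\textbf{Part (i).} The scaling $v(y)=\lambda^{-1/(p-1)}u(y/\sqrt\lambda)$ transforms \eqref{i8} into
\[
-\Delta v+v=v^p\ \text{in }B_R,\qquad \frac{\partial v}{\partial\nu}=0\ \text{on }\partial B_R,
\]
with $R=\sqrt\lambda$, so the task becomes to produce a nonconstant positive radial solution on a large ball. On $B_R$ the embedding $H^1_{\rm rad}\hookrightarrow L^{p+1}$ is compact (Strauss-type pointwise estimate), hence the Rayleigh quotient
\[
J(v)=\frac{\int_{B_R}(|\nabla v|^2+v^2)}{\left(\int_{B_R}|v|^{p+1}\right)^{2/(p+1)}}
\]
attains its infimum on $H^1_{\rm rad}(B_R)$. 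A Lagrange multiplier and a scaling then give a positive radial critical point of the energy. To ensure the minimizer is nonconstant, I would compare $J(1)\sim R^{n(p-1)/(p+1)}$, which blows up with $R$, against the value of $J$ on a suitably truncated and placed copy of the positive ground state of $-\Delta U+U=U^p$ on $\R^n$, which stays bounded in $R$. For $R$ large enough the latter wins, so the minimizer is nonconstant, and reversing the scaling yields the threshold $\lambda_1$.

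\textbf{Part (ii).} For a nonconstant radial solution $u$, the function $w:=u'$ is nontrivial, satisfies $w(0)=w(1)=0$, and, differentiating the radial ODE in $r$,
\[
-w''-\frac{n-1}{r}w'+\frac{n-1}{r^2}w+\lambda w=p\,u^{p-1}w.
\]
Multiplying by $r^{n-1}w$ and integrating on $(0,1)$ gives
\[
\int_0^1 r^{n-1}|w'|^2\,dr+(n-1)\int_0^1 r^{n-3}w^2\,dr+\lambda\int_0^1 r^{n-1}w^2\,dr=p\int_0^1 r^{n-1}u^{p-1}w^2\,dr.
\]
The left-hand side is bounded below by $\mu_1(n)\int_0^1 r^{n-1}w^2\,dr$, where $\mu_1$ is the first eigenvalue of the angular-momentum Sturm--Liouville operator with Dirichlet data at the endpoints, so the identity forces $p\|u\|_\infty^{p-1}\ge\mu_1$. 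An a priori upper bound in the opposite direction comes from testing \eqref{i8} with $1$ (giving $\lambda\int u=\int u^p$) and then bootstrapping via a Moser iteration applied to $-\Delta u=u^p-\lambda u$, which yields $\|u\|_\infty^{p-1}\le C(n,p)\,\lambda$. The two inequalities together give $\lambda\ge\mu_1/(pC)=:\lambda_0(n,p)$, and nonexistence below this threshold follows.

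\textbf{Main obstacle.} The delicate step is the uniform bound $\|u\|_\infty^{p-1}\le C\lambda$ in Part (ii): the maximum principle at an interior extremum delivers only $u_{\min}\le\lambda^{1/(p-1)}\le u_{\max}$, so a genuine $L^\infty$ estimate in terms of $\lambda$ requires a Moser-type iteration whose constants have to be tracked carefully to produce an explicit positive $\lambda_0$. In Part (i), a secondary technical point is to show that the truncated $\R^n$ ground state really gives a value of $J$ bounded uniformly in $R$, and that the compactness of the radial embedding on $B_R$ is quantitative enough to guarantee attainment of the infimum for every large $R$.
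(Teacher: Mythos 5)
First, a remark on context: the paper does not prove Theorem \ref{i7} at all --- it is quoted from Ni \cite{N} and Lin--Ni \cite{LN} as background, so your proposal can only be measured against the known arguments, not against a proof in this paper. Measured that way, both parts of your plan have genuine gaps, and both gaps occur precisely in the supercritical regime $p>\frac{n+2}{n-2}$, which is the case the theorem is quoted for (note $\lambda_0=\lambda_0(n,p)$, $\lambda_1=\lambda_1(n)$ with no restriction on $p$). In Part (i), the claim that $H^1_{\mathrm{rad}}(B_R)\hookrightarrow L^{p+1}(B_R)$ is compact is false on a ball: the Strauss estimate gives decay only away from the origin, and radial functions can concentrate at $r=0$ exactly as nonradial ones do. In fact for supercritical $p$ the infimum of your Rayleigh quotient is $0$ and is not attained (this is the very point the present paper makes about $S_p$ in \eqref{2}), so the minimization scheme collapses. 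Moreover the comparison function you propose, the positive ground state of $-\Delta U+U=U^p$ on $\R^n$, does not exist for $p\ge\frac{n+2}{n-2}$. A correct variational route must restrict the class (e.g.\ to nonnegative radially nondecreasing functions, where the embedding is compact for every $p$, or to a truncated class as in this paper) rather than minimize over all of $H^1_{\mathrm{rad}}$.

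In Part (ii), the identity for $w=u'$ and the conclusion $p\|u\|_\infty^{p-1}\ge \mu_1(n)+\lambda$ are fine and standard. The gap is the step you yourself flag: $\|u\|_\infty^{p-1}\le C(n,p)\lambda$ cannot be obtained by Moser iteration in the supercritical range. Testing with $1$ and Jensen give $\bar u\le\lambda^{1/(p-1)}$ and $\|u\|_{L^p}\le C\lambda^{1/(p-1)}$, but to run Moser/De Giorgi on $-\Delta u\le u^{p-1}\,u$ you need the potential $u^{p-1}$ in $L^q$ with $q>n/2$, i.e.\ $p<\frac{n}{n-2}$, far below critical; from an $L^p$ bound alone the right-hand side is merely $L^1$ and no uniform $L^\infty$ estimate follows. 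That this is not a bookkeeping issue is shown by the critical case: for $p=\frac{n+2}{n-2}$ and $n=4,5,6$ nonconstant radial solutions exist for arbitrarily small $\lambda$ (Adimurthi--Yadava, \cite{AY93}), so any such bound must use the specific structure of the problem and cannot come from soft testing plus iteration. The actual proof of part (ii) in \cite{LN} is an ODE argument exploiting the radial structure (monotonicity of a suitable energy in $r$, relating $u_{\max}$ to $u_{\min}\le\lambda^{1/(p-1)}$), and this is the missing idea your outline would need to supply.
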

From this result and Corollary \eqref{i2}, we derive that the constant $\lambda_0(n,p)\rightarrow0$ as $p\rightarrow+\infty$.

The paper is organized as follows. Sections \ref{s0}-\ref{sec:proof_main_theorem} contain the proof of Theorem \ref{teo:main_theorem} in the case $\bar r \in (0,1)$.
In Section \ref{s0} we introduce a family of variational problems depending on a parameter $p\in (1,\infty)$ and a limit problem.
The existence and convergence of the minimizers is shown in Section \ref{sec:existence_convergence_minimizers}. In Section \ref{sec:proof_main_theorem} we end the proof of Theorem \ref{teo:main_theorem} and present an additional property of the solution (see Proposition \ref{prop:reflection_principle}).
Section \ref{sec:proof_teo_Neumann_bar_r=1} deals with the case $\bar r=1$ and with Theorem \ref{teo:Neumann_bar_r=1}.
Finally, in the Appendix we collect some properties of the Green function.

\sezione{Variational setting and notations}\label{s0}
We introduce the Sobolev space of radial functions
\begin{equation}
H^1_r(B_1)=\{ u\in H^1(B_1):\ u=u(|x|)\}.
\end{equation}
In the following we will often make the abuse of notation $u(r)=u(|x|)$. We find solutions to \eqref{eq:main_nonlinear_equation_neumann} as constrained minimizers (in this space) of the energy functional
\begin{equation}
Q(u)= \int_{B_1} \left[ |\nabla u|^2 + V(|x|) u^2 \right],
\end{equation}
under the standard $L^p$--mass constraint and under an additional constraint which will be proven to be natural for $p$ sufficiently large. Let $\bar r \in (0,1)$ be a local minimum point of $F(r)$, then there exist $0<R_1<R_2<1$ such that $\bar r$ is a global minimum point in $[R_1,R_2]$. Set
\begin{equation*}
K_p=\left\{u\in {H^1_r(B_1)}:\ \left(|B_1|^{-1}\int_{B_1} |u|^{p+1}\right)^{\frac{1}{p+1}}=1 \ \hbox{ and }\ |u|\leq c \text{ in } B_{R_1}\cup(B_1\setminus B_{R_2}) \right\}
\end{equation*}
where $B_R$ denotes the ball centered at the origin of radius $R$ and $c$ satisfies
\begin{equation}\label{eq:property_c}
\max\left\{ \frac{G(R_1,\bar r)}{G(\bar r,\bar r)}, \frac{G(R_2,\bar r)}{G(\bar r,\bar r)} \right\} <c<1.
\end{equation}
Notice that, by the strong maximum principle, for every $r\neq s$ it holds $G(r,s)<G(s,s)$, hence it is always possible to find $c$ as in \eqref{eq:property_c}.
We consider the following infimum
\begin{equation}\label{8}
J_p=\inf\left\{Q(u),\ u\in K_p\right\}
\end{equation}
Of course, a nonnegative function which achieves $J_p$ provides a solution (up to a multiplicative constant) to (\ref{eq:main_nonlinear_equation_neumann}) if $u<c$ in $B_{R_1}\cup(B_1\setminus B_{R_2})$. Actually, we will see that the condition $|u|\leq c$ in $B_{R_1}$ prevents the solutions from concentrating around the origin and $|u|\leq c$ in $B_1\setminus B_{R_2}$ forces the solution to have its maximum around the local minimum point $\bar r$.

Our strategy is based on the asymptotic analysis of the minimization problem $J_p$ as $p\to\infty$. In fact, we will show the convergence of $J_p$ to the limit following infimum,
\begin{equation}\label{9}
J_\infty=\inf\left\{Q(u),\ u\in K_\infty\right\}.
\end{equation}
where $K_\infty$ is
\begin{equation}\label{9a}
K_\infty=\left\{u\in {H^1_r(B_1)}: \ ||u||_\infty=1\hbox{ and }|u|\leq c \text{ in } B_{R_1}\cup(B_1\setminus B_{R_2}) \right\}.
\end{equation}
A key property is the following lemma, which enlightens our choice of the constant $c$ in \eqref{eq:property_c}.
\begin{lemma}\label{lemma:relation_F_Q}
For every $r\in (0,1)$ it holds
\begin{equation}
F(r)=Q\left(\frac{G(\cdot,r)}{G(r,r)}\right).
\end{equation}
\end{lemma}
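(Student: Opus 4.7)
The plan is to exploit the fact that $Q$ is a quadratic form and that $G(\cdot,r)$ is the Green function of $\mathcal{L}$, linked via integration by parts. Since $Q$ is homogeneous of degree two,
\[ Q\!\left(\frac{G(\cdot,r)}{G(r,r)}\right) = \frac{Q(G(\cdot,r))}{G(r,r)^2}, \]
so, by the definition of $F(r)$, the claim reduces to showing
\[ Q(G(\cdot,r)) = |\partial B_1|\, r^{n-1}\, G(r,r). \]

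Writing $g(\rho) := G(\rho,r)$ and passing to radial coordinates one has
\[ Q(g) = |\partial B_1| \int_0^1 \bigl[g'(\rho)^2 + V(\rho)\,g(\rho)^2\bigr]\,\rho^{n-1}\, d\rho, \]
and integration by parts on the gradient term gives
\[ Q(g) = |\partial B_1| \bigl[g(\rho)\,g'(\rho)\,\rho^{n-1}\bigr]_0^1 + |\partial B_1| \int_0^1 g(\rho)\,\mathcal{L}g(\rho)\,\rho^{n-1}\, d\rho. \]
The boundary term vanishes: at $\rho = 1$ by the Neumann condition $g'(1) = 0$ satisfied by $G(\cdot,r)$, and at $\rho = 0$ because the weight $\rho^{n-1}$ kills it for $n \geq 2$ (together with the regularity condition $g'(0) = 0$ built into the definition of $\mathcal{L}$).

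Since $g = G(\cdot,r)$ is the Green function of $\mathcal{L}$, the distributional identity $\mathcal{L}_\rho G(\rho,r) = \delta_r(\rho)$ holds, and the remaining integral collapses to $G(r,r)\,r^{n-1}$, yielding the desired formula. The only (mild) subtlety is making this last identity rigorous; in practice one splits the integration on $[0,r]$ and $[r,1]$, where $g$ is classically $C^2$ and satisfies $\mathcal{L}g = 0$, and picks up the Wronskian-type jump $g'(r^-) - g'(r^+)$ normalized to produce the Dirac mass. The very same computation applies unchanged in the Dirichlet setting, with $g(1) = 0$ replacing $g'(1) = 0$, which is why this lemma underlies both the Neumann and Dirichlet frameworks in the paper.
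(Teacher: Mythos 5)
Your proof is correct and follows essentially the same route as the paper: after passing to radial coordinates, the paper multiplies the distributional identity $\mathcal{L}G(\cdot,r)=\delta_r$ by $t^{n-1}G(t,r)$ and integrates over $(0,1)$, which is exactly your integration by parts with the boundary terms killed by the Neumann condition at $1$ and the weight $\rho^{n-1}$ at $0$. Your added remark on justifying the Dirac mass via the Wronskian jump of $G_\rho$ at $\rho=r$ is a fine (and consistent) elaboration of the same computation.
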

\begin{proof}
By passing to polar coordinates we obtain
\begin{equation}\label{eq:relation_F_Q}
Q\left(\frac{G(\cdot,r)}{G(r,r)}\right)=
\frac{|\partial B_1|}{G(r,r)^2}\int_0^1\left[ \left(\frac{\partial G(t,r)}{\partial t}\right)^2+V(t)G(t,r)^2\right] t^{n-1}dt.
\end{equation}
On the other hand, by the definition of the Green function, we have
\begin{equation}
-\frac{\partial^2G(t,r)}{\partial t^2}-\frac{n-1}{t}\frac{\partial G(t,r)}{\partial t}+V(t)G(t,r)=\delta_r
\end{equation}
in the sense of distributions. Multiplying the last equation by $t^{n-1}G(t,r)$, integrating over $(0,1)$ and finally substituting in \eqref{eq:relation_F_Q}, we obtain the thesis.
\end{proof}

\sezione{Existence and convergence of the constrained minimizers}\label{sec:existence_convergence_minimizers}
Let us start by proving the existence of a minimizer to $J_p$.
\begin{prop}\label{prop:existence_u_p}
There exists a nonnegative function $u_p\in K_p$ such that $J_p=Q(u_p)$.
\end{prop}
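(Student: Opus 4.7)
The plan is to apply the direct method of the calculus of variations in $H^1_r(B_1)$; the main obstacle is that the exponent $p+1$ can be supercritical, so the standard Sobolev embedding into $L^{p+1}(B_1)$ is not compact. The key observation is that the extra pointwise constraint built into $K_p$, combined with the one-dimensional nature of radial functions on the annulus $B_{R_2}\setminus B_{R_1}$, provides a uniform $L^\infty$ bound on every minimizing sequence, which is precisely what is needed to recover the missing compactness.

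First, I would verify that $K_p$ is nonempty and $J_p<\infty$: any nonnegative smooth radial function compactly supported in the annulus $B_{R_2}\setminus B_{R_1}$, rescaled so that the $L^{p+1}$ normalization holds, lies in $K_p$ (the pointwise bound on the end regions is trivial since the function vanishes there), and clearly $J_p\ge 0$ since $V\ge 0$.

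Next, I would pick a minimizing sequence $(u_n)\subset K_p$. From $Q(u_n)\to J_p$ and $V\ge 0$ I get a uniform bound on $\|\nabla u_n\|_{L^2}$; the $L^{p+1}$ normalization and H\"older's inequality on the bounded domain give a uniform $L^2$ bound, so $(u_n)$ is bounded in $H^1_r(B_1)$. Up to a subsequence, $u_n\rightharpoonup u_p$ weakly in $H^1_r(B_1)$ and $u_n\to u_p$ strongly in $L^2(B_1)$ (hence almost everywhere) by the subcritical Rellich--Kondrachov embedding.

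The heart of the argument is the uniform $L^\infty(B_1)$ bound on $(u_n)$: on $B_{R_1}\cup(B_1\setminus B_{R_2})$ this is part of the definition of $K_p$, while on the annulus $B_{R_2}\setminus B_{R_1}$ the norm of $u_n$ in $H^1_r(B_1)$ controls the standard 1D $H^1$ norm of $u_n$ on $[R_1,R_2]$ (since $r^{n-1}\ge R_1^{n-1}$ there), and the 1D Sobolev embedding $H^1\hookrightarrow C^0$ then yields $\|u_n\|_{L^\infty(B_{R_2}\setminus B_{R_1})}\le C$. Dominated convergence upgrades the a.e.\ convergence to $u_n\to u_p$ in every $L^q(B_1)$, in particular in $L^{p+1}$. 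From this, the $L^{p+1}$ normalization and the pointwise constraint pass to the limit, so $u_p\in K_p$; weak lower semicontinuity of $Q$ in $H^1$ (the gradient term by the standard argument, the potential term by strong $L^2$ convergence together with boundedness of $V$) gives $Q(u_p)\le\liminf Q(u_n)=J_p$, hence equality. Since $|u_p|\in K_p$ and $Q(|u_p|)=Q(u_p)$ by $|\nabla|u_p||=|\nabla u_p|$ a.e., I may finally replace $u_p$ with $|u_p|$ to ensure nonnegativity.
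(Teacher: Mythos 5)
Your proof is correct and follows essentially the same route as the paper: the direct method, with compactness recovered from the pointwise bound $|u|\le c$ near the origin and the boundary together with the one-dimensional embedding for radial functions on the annulus (the paper phrases this as the compact embedding of $H^1_r(B_1\setminus B_{R_1})$ into $L^{p+1}$, you as a uniform $L^\infty$ bound plus dominated convergence, which is the same mechanism). Your derivation of the $L^2$ bound from the $L^{p+1}$ normalization via H\"older is a slightly more explicit justification of the $H^1$ boundedness than the paper's appeal to $V\ge 0$, $V\not\equiv 0$, but the argument is the same in substance.
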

\begin{proof}
Let $u_p^n\in K_p$ be a minimizing sequence for $J_p$, that is
\begin{equation}\label{a1}
\lim_{n\to\infty}Q(u_p^n)=J_p.
\end{equation}
Being $V(|x|)\geq0, V\not\equiv0$, we infer that $\{u_p^n\}_n$ is bounded in ${H^1_r(B_1)}$, hence it converges weakly (up to subsequences) to some function $u_p\in {H^1_r(B_1)}$ and almost everywhere in $B_1$. Now, it is immediate to check that $u_p\in K_p$, since
\begin{equation}\label{a2}
\|u_p^n-u_p\|_{L^p(B_1)}=\|u_p^n-u_p\|_{L^p(B_{R_1})}+\|u_p^n-u_p\|_{L^p(B_1\setminus B_{R_1})},
\end{equation}
and both terms tend to zero (as $n\to\infty$); the first one, since $|u_p^n|\le c$, by the Lebesgue convergence theorem and the second one, by the compact embedding of $H^1_r(B_1\setminus B_{R_1})$ into $L^p(B_1\setminus B_{R_1})$ for every $p$. Finally, $u_p$ can be chosen nonnegative since $\int_{B_1} |\nabla|u||^2=\int_{B_1}|\nabla u|^2$.
\end{proof}
We are in a position to prove the weak convergence of the minimizers as $p\to\infty$. Let $A=B_{R_2}\setminus B_{R_1}$.
\begin{lemma}\label{lemma:weak_convergence_u_p}
Let $u_{p_n}\in K_{p_n}$ be a sequence of minimizers to $J_{p_n}$. Then there exists $u_\infty \in {H^1_r(B_1)}$ such that, up to a subsequence denoted again by $p_n$, it holds
\begin{equation}\label{a3}
u_{p_n}\rightharpoonup u_\infty \ \text{ in } H^1(B_1), \quad u_{p_n}\to u_\infty \ \text{ in } L^q(B_1), \ \forall q<\infty.
\end{equation}
In addition, $u_{p_n}\to u_\infty$ in $L^\infty(A)$.
\end{lemma}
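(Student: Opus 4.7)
The plan is first to bound $J_{p_n}$ uniformly in $n$, then to extract a weakly convergent subsequence in $H^1$, and finally to upgrade the convergence mode using separately the pointwise bound $|u_{p_n}|\leq c$ off the annulus $A$ and the one-dimensional regularity of radial $H^1$ functions on $A$.

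For the uniform bound on $J_{p_n}$, I would use as a competitor a rescaling $\lambda_{p_n}v$ of the normalized Green function $v=G(\cdot,\bar r)/G(\bar r,\bar r)$, which has $\|v\|_\infty=1$ (attained at $\bar r$) and $Q(v)=F(\bar r)$ by Lemma \ref{lemma:relation_F_Q}. The scaling factor $\lambda_p=(|B_1|^{-1}\int_{B_1}v^{p+1})^{-1/(p+1)}$ satisfies $\lambda_p\to 1$ as $p\to\infty$, since $\|v\|_{L^{p+1}}\to\|v\|_\infty$ on the finite measure space $B_1$. The strict inequality in \eqref{eq:property_c}, together with the fact that $G(\cdot,\bar r)$ is monotone away from its peak at $\bar r$, yields $v\leq c-\delta$ on $B_{R_1}\cup(B_1\setminus B_{R_2})$ for some $\delta>0$. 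Hence $\lambda_{p_n}v\in K_{p_n}$ for $n$ large, giving $J_{p_n}\leq Q(\lambda_{p_n}v)=\lambda_{p_n}^2 F(\bar r)$, bounded.

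Since $V\geq 0$, one has $\int_{B_1}|\nabla u_{p_n}|^2\leq Q(u_{p_n})=J_{p_n}\leq C$, and H\"older's inequality applied to the $L^{p_n+1}$-mass constraint gives $\|u_{p_n}\|_{L^2}^2\leq |B_1|^{2/(p_n+1)}|B_1|^{1-2/(p_n+1)}=|B_1|$. Thus $\{u_{p_n}\}$ is bounded in $H^1_r(B_1)$ and, up to a subsequence, converges weakly in $H^1(B_1)$ to some $u_\infty\in H^1_r(B_1)$, strongly in $L^2(B_1)$ by Rellich--Kondrachov, and a.e.\ on $B_1$ after a further extraction.

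To upgrade the mode of convergence, I would split $B_1$ into $A$ and its complement. On $B_{R_1}\cup(B_1\setminus B_{R_2})$ the constraint $|u_{p_n}|\leq c$, together with the a.e.\ convergence and dominated convergence, yields strong convergence in every $L^q$, $q<\infty$. On $A$, each radial $u_{p_n}$ restricts to a function of the single variable $r\in[R_1,R_2]$ with $\int_{R_1}^{R_2}((u_{p_n}')^2+u_{p_n}^2)r^{n-1}\,dr$ uniformly bounded; since $r^{n-1}$ is bounded above and below on $[R_1,R_2]$, the restriction is bounded in $H^1((R_1,R_2))$, and the compact embedding $H^1((R_1,R_2))\hookrightarrow C^0([R_1,R_2])$ gives uniform, hence $L^\infty(A)$, convergence along a further subsequence. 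Gluing the two regions gives strong $L^q(B_1)$ convergence for every $q<\infty$. The main delicate step is the uniform bound on $J_{p_n}$: the competitor $v$ saturates the $L^\infty$ bound at an interior point of $A$, and one must check that after $L^{p+1}$-normalization the rescaled function $\lambda_p v$ still obeys the pointwise bound required by $K_p$; this is precisely where the strict inequality in \eqref{eq:property_c} is used, through $\lambda_p\to 1$.
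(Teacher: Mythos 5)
Your proposal is correct, and its auxiliary steps (weak $H^1$ bound, splitting $B_1$ into $A$ and its complement, dominated convergence where $|u_{p_n}|\leq c$, the one-dimensional compact embedding $H^1((R_1,R_2))\hookrightarrow C^0([R_1,R_2])$ for the $L^\infty(A)$ convergence) are exactly the ingredients the paper relies on, with your treatment of the $L^\infty(A)$ part actually more explicit than the paper's ``proceed as in Proposition \ref{prop:existence_u_p}''. The genuine difference is in the key step, the uniform bound on $J_{p_n}$. The paper takes an arbitrary nonnegative $\eta\in K_1$ and normalizes it in $L^{p+1}$; by H\"older the normalizing factor is $\geq 1$, so the pointwise constraint $|\eta_p|\leq c$ is automatically preserved and $Q(\eta_p)\leq Q(\eta)$, giving $J_p\leq Q(\eta)$ for \emph{every} $p\geq 1$ with no asymptotics and no information about the Green function. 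You instead use the specific competitor $v=G(\cdot,\bar r)/G(\bar r,\bar r)$, whose $L^{p+1}$-normalization factor $\lambda_p$ tends to $1$, and you must check membership in $K_p$ only for $p$ large, via the strict inequality in \eqref{eq:property_c} together with the claim that $v<c$ on all of $B_{R_1}\cup(B_1\setminus B_{R_2})$ (not just at $R_1,R_2$); that claim is true but deserves a one-line justification, e.g.\ the maximum principle for $-\Delta v+Vv=0$, $V\geq0$, in $B_{R_1}$ and in $B_1\setminus B_{R_2}$ (with Hopf at $r=1$), or the monotonicity of $\xi,\zeta$ from Lemma \ref{lemma:appendix_factorization_of_G}. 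The trade-off: the paper's trick is more elementary and uniform in $p$, while your competitor yields the sharper estimate $\limsup_p J_p\leq F(\bar r)=Q(v)$ by Lemma \ref{lemma:relation_F_Q}, which anticipates the upper bound that the paper only obtains later through the approximation Lemma \ref{lemma:sequence_w_p}. Your $L^2$ bound via the mass constraint is also a small variant of the paper's appeal to $V\geq0$, $V\not\equiv0$, and is perfectly adequate.
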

\begin{proof}
Let us prove that the sequence $J_{p_n}$ is bounded in $H^1(B_1)$, then the statement follows proceeding as in the proof of Proposition \ref{prop:existence_u_p}. To this aim consider a nonnegative test function $\eta \in K_1$ and set
\begin{equation}\label{a5}
\eta_p=\frac{\eta}{\left(|B_1|^{-1}\int_{B_1} \eta^{p+1}\right)^{\frac{1}{p+1}}}.
\end{equation}
Using the H\"older inequality we obtain, for every $p\geq1$
\begin{equation}\label{a6}
Q(\eta_p)=\frac{Q(\eta)}{\left(|B_1|^{-1}\int_{B_1} \eta^{p+1}\right)^{\frac{2}{p+1}}}\le
\frac{Q(\eta)}{|B_1|^{-1}\int_{B_1} \eta^2}=Q(\eta).
\end{equation}
Being $\eta_p\in K_p$, this implies $J_p\leq Q(\eta)$, which concludes the proof.
\end{proof}
The next lemma, roughly speaking, ensures that the ``mass'' of the $u_p$'s concentrates in $A$ as $p\to\infty$.
\begin{lemma}\label{lemma:mass_concentration_in_the_annulus}
It holds
\begin{equation}
\gamma_p=\left(|A|^{-1}\int_A u_p^{p+1}\right)^{\frac{1}{p+1}}\to 1 \quad \text{ as }p\to\infty.
\end{equation}
\end{lemma}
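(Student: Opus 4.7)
The plan is to bound $\gamma_p$ from above and below by quantities that tend to $1$ as $p\to\infty$, exploiting the two pieces of information we have about $u_p$: the global $L^{p+1}$ normalization $\int_{B_1} u_p^{p+1}=|B_1|$ and the uniform bound $u_p\le c<1$ on $B_1\setminus A = B_{R_1}\cup(B_1\setminus B_{R_2})$.

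For the upper bound, since $u_p\ge 0$ we simply estimate
\[
\gamma_p^{p+1}=|A|^{-1}\int_A u_p^{p+1}\le |A|^{-1}\int_{B_1}u_p^{p+1}=\frac{|B_1|}{|A|},
\]
so that $\gamma_p\le(|B_1|/|A|)^{1/(p+1)}\to 1$ as $p\to\infty$, because $|B_1|/|A|$ is a fixed positive constant.

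For the lower bound, I would split the integral over $B_1$ into the part on $A$ and the part on $B_1\setminus A$, and use the uniform bound $|u_p|\le c$ on the latter set:
\[
\int_A u_p^{p+1}=\int_{B_1}u_p^{p+1}-\int_{B_1\setminus A}u_p^{p+1}\ge |B_1|-c^{p+1}\,|B_1\setminus A|.
\]
Since $c<1$, the term $c^{p+1}|B_1\setminus A|$ vanishes as $p\to\infty$, so for $p$ large the quantity inside the parentheses is positive and converges to $|B_1|/|A|>0$. Taking the $(p+1)$-th root and passing to the logarithm (or simply noting that $a^{1/(p+1)}\to 1$ for any fixed positive limit $a$) gives $\liminf_{p\to\infty}\gamma_p\ge 1$. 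Combined with the upper bound, this yields the claim.

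There is no real obstacle here: the proof is a two-line calculation that hinges exclusively on the additional constraint $|u|\le c<1$ imposed in the definition of $K_p$. This is exactly the role that the constraint plays in the variational setting, ensuring that the $L^{p+1}$-mass is forced to concentrate in the annulus $A$ where the local minimum $\bar r$ of $F$ lives.
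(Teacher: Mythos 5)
Your proof is correct and follows essentially the same route as the paper: both arguments rest on the normalization $\int_{B_1}u_p^{p+1}=|B_1|$ together with the bound $u_p\le c<1$ on $B_1\setminus A$, which makes $\int_{B_1\setminus A}u_p^{p+1}\le c^{p+1}|B_1\setminus A|\to 0$, and then on the elementary fact that the $(p+1)$-th root of a quantity bounded between positive constants tends to $1$. The only cosmetic difference is that you split into separate upper and lower bounds, while the paper writes the single identity $\gamma_p=|A|^{-\frac{1}{p+1}}\bigl(|B_1|-\int_{B_1\setminus A}u_p^{p+1}\bigr)^{\frac{1}{p+1}}$ and concludes directly.
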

\begin{proof}
Since $u_p\in K_p$, a direct calculation gives
\[
\gamma_p=|A|^{-\frac{1}{p+1}} \left(|B_1|-\int_{B_1\setminus A} u_p^{p+1}\right)^\frac{1}{p+1}.
\]
Now, $\int_{B_1\setminus A} u_p^{p+1}\leq \int_{B_1\setminus A}c^{p+1}\to0$ (being $c<1$), hence $\gamma_p\to1$.
\end{proof}
As a consequence of the previous lemma we deduce that $u_\infty \in K_\infty$, in fact the following holds.
\begin{lemma}\label{lemma:Linfty_norm_of_u_infty}
The limit function $u_\infty$ satisfies $\|u_\infty\|_{L^\infty(A)}=1$.
\end{lemma}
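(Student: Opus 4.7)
My plan is to prove the equality $\|u_\infty\|_{L^\infty(A)}=1$ by splitting it into the two inequalities, combining the uniform convergence on $A$ granted by Lemma~\ref{lemma:weak_convergence_u_p} with the asymptotic mass concentration of Lemma~\ref{lemma:mass_concentration_in_the_annulus}.

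For the lower bound I would use the elementary estimate
\[
\gamma_p=\left(|A|^{-1}\int_A u_p^{p+1}\right)^{\frac{1}{p+1}}\le \|u_p\|_{L^\infty(A)},
\]
which holds because $u_p^{p+1}\le \|u_p\|_{L^\infty(A)}^{p+1}$ pointwise on $A$. Since $u_{p_n}\to u_\infty$ in $L^\infty(A)$ we have $\|u_{p_n}\|_{L^\infty(A)}\to\|u_\infty\|_{L^\infty(A)}$, while $\gamma_{p_n}\to 1$ by Lemma~\ref{lemma:mass_concentration_in_the_annulus}; passing to the limit yields $\|u_\infty\|_{L^\infty(A)}\ge 1$.

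For the reverse inequality I would argue by contradiction, supposing $\|u_\infty\|_{L^\infty(A)}=M>1$. Since $A$ is an annulus bounded away from the origin, every radial $H^1$ function on $A$ reduces to a one-dimensional $H^1$ function on $[R_1,R_2]$, hence embeds into $C(\overline A)$; in particular $u_\infty$ is continuous on $\overline A$. Combining continuity with uniform convergence on $A$, there exist $\varepsilon>0$ and an open set $\omega\subset A$ of positive measure such that $u_{p_n}\ge 1+\varepsilon$ on $\omega$ for all large $n$. The constraint $u_{p_n}\in K_{p_n}$ then forces
\[
|B_1|=\int_{B_1} u_{p_n}^{p_n+1}\ge |\omega|\,(1+\varepsilon)^{p_n+1}\longrightarrow +\infty,
\]
a contradiction that gives $\|u_\infty\|_{L^\infty(A)}\le 1$.

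The argument is essentially routine; the only mildly delicate step is the one needed to extract a set of positive measure where $u_{p_n}$ exceeds $1+\varepsilon$, for which the $L^\infty(A)$ (and not merely weak or $L^q$) convergence established in Lemma~\ref{lemma:weak_convergence_u_p} is exactly what is required.
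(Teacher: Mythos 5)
Your proof is correct, and while the overall frame (splitting into two inequalities and combining Lemma \ref{lemma:mass_concentration_in_the_annulus} with the $L^\infty(A)$ convergence of Lemma \ref{lemma:weak_convergence_u_p}) matches the paper, the upper bound is obtained by a genuinely different mechanism. The paper proves $\|u_\infty\|_{L^\infty(A)}\le 1$ via a double-limit argument with normalized $L^q$ norms: for fixed $q$ and $p\ge q$, H\"older on the normalized measure gives $\left(|A|^{-1}\int_A u_p^{q+1}\right)^{\frac{1}{q+1}}\le \gamma_p\to 1$, and then $q\to\infty$ recovers the sup norm of $u_\infty$; your version instead argues by contradiction directly from the constraint $\int_{B_1}u_{p_n}^{p_n+1}=|B_1|$, showing it would blow up like $|\omega|(1+\varepsilon)^{p_n+1}$ on a set $\omega$ of positive measure where $u_{p_n}\ge 1+\varepsilon$. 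Your route is more elementary (no exchange of $p$ and $q$ limits, no monotonicity of normalized norms), and in fact the continuity discussion is not even needed: since $\|u_\infty\|_{L^\infty(A)}>1$ is an essential supremum, a positive-measure set where $u_\infty>1+2\varepsilon$ exists by definition, and the $L^\infty(A)$ convergence then gives $u_{p_n}\ge 1+\varepsilon$ there. Your lower bound, via $\gamma_p\le\|u_p\|_{L^\infty(A)}$, is the same estimate the paper encodes through the $q\to\infty$ limit in \eqref{a7}--\eqref{a8}, just stated more directly. Both approaches are valid; the paper's has the merit of staying entirely within the family of normalized integral norms used throughout that section, while yours exploits the explicit form of the $K_p$ constraint.
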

\begin{proof}
Fix $p>1$. Then, the H\"older inequality gives ($\gamma_p$ is defined in Lemma \ref{lemma:mass_concentration_in_the_annulus}),
\begin{equation}\label{a7}
\lim_{q\to\infty} \left(|A|^{-1}\int_A u_p^{q+1}\right)^{\frac{1}{q+1}} \geq \left(|A|^{-1}\int_A u_p^{p+1}\right)^{\frac{1}{p+1}}=\gamma_p.
\end{equation}
By Lemma \ref{lemma:weak_convergence_u_p} we have that $u_p\to u_\infty$ in $L^\infty(A)$  and then
\begin{equation}\label{a8}
\|u_\infty\|_{L^\infty(A)}=\lim_{p\to\infty}\|u_p\|_{L^\infty(A)}=
\lim_{p\to\infty}\lim_{q\to\infty} \left(|A|^{-1}\int_A u_p^{q+1}\right)^{\frac{1}{q+1}}\geq \lim_{p\to\infty}\gamma_p=1.
\end{equation}
Similarly we compute, for a fixed $q>1$,
\begin{equation}\label{a9}
\lim_{p\to\infty} \left(|A|^{-1}\int_A u_p^{q+1}\right)^{\frac{1}{q+1}} \leq \lim_{p\to\infty}
\left(|A|^{-1}\int_A u_p^{p+1}\right)^{\frac{1}{p+1}}=1,
\end{equation}
which gives the opposite inequality,
\begin{equation}\label{a10}
\|u_\infty\|_{L^\infty(A)}=\lim_{q\to\infty}\|u_\infty\|_{L^q(A)}=
\lim_{q\to\infty}\lim_{p\to\infty} \left(|A|^{-1}\int_A u_p^{q+1}\right)^{\frac{1}{q+1}}\leq 1.\qedhere
\end{equation}
\end{proof}
On the other hand we have the following approximation result.
\begin{lemma}\label{lemma:sequence_w_p}
Let $u \in K_\infty$ be a nonnegative function. Then, for any $p>1$, there exists  $w_p\in K_p$, such that $w_p\to u$ in $H^1(B_1)$.
\end{lemma}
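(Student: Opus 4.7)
The plan is to perturb $u$ by a small nonnegative radial bump supported strictly inside the annulus $A=B_{R_2}\setminus B_{R_1}$, with amplitude tuned so that the $L^{p+1}$ normalization holds exactly. A naive rescaling $w_p=u/\alpha_p$, with $\alpha_p:=(|B_1|^{-1}\int_{B_1}u^{p+1})^{1/(p+1)}$, would fail because $\alpha_p\le\|u\|_\infty=1$ makes the scaling factor $\ge1$, risking violation of $|w_p|\le c$ on the constrained region. The perturbation is placed in $A$ precisely because the pointwise constraint is inactive there.

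First I would locate a maximum of $u$ strictly inside the annulus. Radial $H^1$ functions are continuous on $(0,1]$, so $u$ restricted to the compact set $[R_1,R_2]$ attains its supremum at some $r^*\in[R_1,R_2]$. Since $|u|\le c$ on $B_{R_1}\cup(B_1\setminus B_{R_2})$ and $\|u\|_\infty=1$, one must have $\|u\|_{L^\infty(A)}=1=u(r^*)$. Continuity and the $c$-bound force $u(R_1),u(R_2)\le c<1$, so in fact $r^*\in(R_1,R_2)$. Fix a radial nonnegative bump $\phi\in C^\infty_c(R_1,R_2)$ with $\phi(r^*)>0$.

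For each $p>1$, define
\[
f_p(\delta)=\Big(|B_1|^{-1}\int_{B_1}(u+\delta\phi)^{p+1}\Big)^{1/(p+1)}.
\]
This is continuous and strictly increasing on $[0,\infty)$ (since $u,\phi\ge 0$ and $\phi\not\equiv0$), with $f_p(0)=\alpha_p\le1$ and $f_p(\delta)\to\infty$ as $\delta\to\infty$. By the intermediate value theorem there exists a unique $\delta_p\ge0$ with $f_p(\delta_p)=1$. Set $w_p=u+\delta_p\phi$. The $L^{p+1}$ normalization holds by construction, and since $\phi$ vanishes on $B_{R_1}\cup(B_1\setminus B_{R_2})$, $w_p$ agrees with $u$ there and thus inherits $|w_p|\le c$; hence $w_p\in K_p$.

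The main step, and the only delicate one, is to show $\delta_p\to0$, from which $\|w_p-u\|_{H^1}=\delta_p\|\phi\|_{H^1}\to0$ follows. I would argue by contradiction: if $\delta_{p_n}\ge\varepsilon>0$ along some subsequence, the monotonicity of $f_p$ in $\delta$ gives $1=f_{p_n}(\delta_{p_n})\ge f_{p_n}(\varepsilon)$. Since $(|B_1|^{-1}\int_{B_1}g^q)^{1/q}\to\|g\|_\infty$ as $q\to\infty$ for any bounded $g$, passing to the limit yields
\[
1\ge\|u+\varepsilon\phi\|_\infty\ge u(r^*)+\varepsilon\phi(r^*)=1+\varepsilon\phi(r^*)>1,
\]
a contradiction. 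The whole argument rests on locating the maximum of $u$ strictly inside $A$, which guarantees that $\phi$ can be placed at a point where $u$ already equals $1$.
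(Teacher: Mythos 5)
Your proof is correct and follows essentially the same strategy as the paper: a nonnegative perturbation supported in $A$ (where the pointwise constraint is inactive), with amplitude fixed by the intermediate value theorem to meet the $L^{p+1}$ normalization, and the amplitude shown to vanish by contradiction using $\bigl(|B_1|^{-1}\int_{B_1}|\cdot|^{p+1}\bigr)^{1/(p+1)}\to\|\cdot\|_\infty$ at an interior point where $u=1$. The only difference is the perturbation direction --- you use a fixed smooth bump $\delta\phi$ centered at an interior maximum, while the paper uses $\sigma(u-c)^+$ in $A$ --- which does not change the argument (and you in fact justify the existence of the interior point with $u=1$, which the paper uses implicitly).
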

\begin{proof}
Let $\vphi(x)=\min(u(x),c)$, defined in $A$. For $\sigma\geq 0$, we define
\begin{equation}\label{a11}
w_\sigma=\left\{ \begin{array}{ll}
u &\text{ in }B_1\setminus A\\
\sigma(u-\vphi)+u&\text{ in }A
\end{array}\right.
\end{equation}
Let us show that, for any $p>1$, there exists $\sigma_p>0$ such that $w_\sigma\in K_p$. To this aim we introduce the function
\begin{equation}\label{a12}
Z(\sigma)=\int_{B_1}|w_\sigma|^{p+1}=|B_1|^{-1}\int_{B_1\setminus A}|u|^{p+1}+|B_1|^{-1}\int_A |\sigma(u-\vphi)+u|^{p+1}.
\end{equation}
$Z$ is continuous, $\lim_{\sigma\to\infty}Z(\sigma)=+\infty$ and moreover
\[
Z(0)=|B_1|^{-1}\int_{B_1}|u|^{p+1}<\|u_p\|_{L^\infty(B_1)}=1,
\]
since $u\not\equiv1$, by the definition of $K_\infty$. Hence there exists $\sigma_p>0$ such that $Z(\sigma_p)=1$. Setting $w_p=w_{\sigma_p}$ and observing that $w_p$ is continuous in $B_1$ and $w_p\in H^1(B_1)$, we conclude that $w_p\in K_p$. Moreover, since $w_p\leq c$ in $B_1\setminus A$, we have
\begin{equation}\label{a13}
\left(|A|^{-1}\int_A |w_p|^{p+1}\right)^{\frac{1}{p+1}}\to1 \quad \text{ as } p\to\infty.
\end{equation}
Let us prove that $\sigma_p\to0$, which concludes the proof. If not, there exists $\delta>0$ such that $\sigma_p>\delta$ for every $p$. This implies $\sigma_p(u-\vphi)\geq \delta (u-\vphi)$ and hence (being $w_p$ nonnegative)
\[
1=\lim_{p\to\infty}\left(|A|^{-1}\int_A |w_p|^{p+1}\right)^{\frac{1}{p+1}} \geq
\lim_{p\to\infty}\left(|A|^{-1}\int_A |(1+\delta)u-\delta\vphi|^{p+1}\right)^{\frac{1}{p+1}}=
\|(1+\delta)u-\delta\vphi\|_{L^\infty(A)}.
\]
Let now $\hat r\in (R_1,R_2)$ be such that $u(\hat r)=1$, then we have
\[
1\geq \|(1+\delta)u-\delta\vphi\|_{L^\infty(A)}\geq (1+\delta)u(\hat r)-\delta \vphi(\hat r)\geq(1+\delta)-\delta c = 1+\delta(1-c)>1,
\]
which is a contradiction.
\end{proof}
The next proposition proves the convergence of the constrained variational problems $J_p$ to the limit problem $J_\infty$ (see \eqref{9}).
\begin{prop}
We have that
\begin{equation}\label{a14}
\lim_{p\to\infty} J_p=J_\infty \quad \text{ and } \quad u_p\to u_\infty \text{ in } H^1(B_1).
\end{equation}
Moreover, $u_\infty\in  K_\infty$ and $Q(u_\infty)=J_\infty$.
\end{prop}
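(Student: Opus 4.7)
The plan is to combine the preceding lemmas into a standard $\Gamma$-convergence-type sandwich. All the pieces are already in place: weak $H^1$ compactness and $L^q$ convergence of the $u_p$'s from Lemma~\ref{lemma:weak_convergence_u_p}, the identification of the limit $L^\infty$-norm on $A$ from Lemma~\ref{lemma:Linfty_norm_of_u_infty}, and the recovery-sequence construction from Lemma~\ref{lemma:sequence_w_p}. Let me outline the argument.

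First I would verify that $u_\infty \in K_\infty$. Since $|u_p| \le c$ on $B_{R_1} \cup (B_1 \setminus B_{R_2})$ by the definition of $K_p$, the pointwise a.e.\ (or $L^q$) convergence of Lemma~\ref{lemma:weak_convergence_u_p} passes this bound to $u_\infty$. Combining this with Lemma~\ref{lemma:Linfty_norm_of_u_infty}, which gives $\|u_\infty\|_{L^\infty(A)}=1$, and the strict inequality $c<1$, we deduce $\|u_\infty\|_{L^\infty(B_1)}=1$; hence $u_\infty \in K_\infty$. Nonnegativity of $u_\infty$ is inherited from the nonnegative choice of $u_p$ in Proposition~\ref{prop:existence_u_p}.

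Next I would prove the liminf inequality $J_\infty \le \liminf_{p\to\infty} J_p$. Since $u_\infty \in K_\infty$ and $Q$ is weakly lower semicontinuous on $H^1_r(B_1)$ (as $V \ge 0$ is smooth hence bounded, $\int V u^2$ is weakly continuous by compact $L^2$ embedding, and $\int |\nabla u|^2$ is weakly lsc),
\[
J_\infty \le Q(u_\infty) \le \liminf_{p\to\infty} Q(u_p) = \liminf_{p\to\infty} J_p.
\]
For the matching limsup, I apply Lemma~\ref{lemma:sequence_w_p} with the choice $u = u_\infty$ (nonnegative, in $K_\infty$), obtaining $w_p \in K_p$ with $w_p \to u_\infty$ strongly in $H^1(B_1)$. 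Then
\[
\limsup_{p\to\infty} J_p \le \limsup_{p\to\infty} Q(w_p) = Q(u_\infty).
\]
Chaining the two inequalities yields $J_\infty \le Q(u_\infty) \le \liminf J_p \le \limsup J_p \le Q(u_\infty)$, so all quantities agree: $\lim_p J_p = J_\infty = Q(u_\infty)$, which also shows that $u_\infty$ realizes the infimum $J_\infty$.

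Finally, I would upgrade the weak convergence in $H^1$ to strong convergence. From $Q(u_p) = J_p \to J_\infty = Q(u_\infty)$ and the strong $L^2$ convergence $u_p \to u_\infty$ (Lemma~\ref{lemma:weak_convergence_u_p}) together with the boundedness of the smooth potential $V$, we get $\int_{B_1} V u_p^2 \to \int_{B_1} V u_\infty^2$, hence $\int_{B_1} |\nabla u_p|^2 \to \int_{B_1} |\nabla u_\infty|^2$. Convergence of norms plus weak convergence in the Hilbert space gives strong convergence of gradients, and combined with the $L^2$ convergence this yields $u_p \to u_\infty$ in $H^1(B_1)$. I do not foresee a serious obstacle here; the only subtlety is ensuring that the approximation $w_p$ from Lemma~\ref{lemma:sequence_w_p} may indeed be taken starting from the nonnegative $u_\infty$, which is why it matters that the $u_p$ were selected nonnegative at the outset.
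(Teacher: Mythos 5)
Your overall structure (liminf via weak lower semicontinuity, limsup via the recovery sequence of Lemma \ref{lemma:sequence_w_p}, then upgrading to strong $H^1$ convergence from convergence of energies) is the paper's structure, and the liminf half and the strong-convergence upgrade are fine. But there is a genuine gap in the limsup half: you apply Lemma \ref{lemma:sequence_w_p} only at $u=u_\infty$. That gives $\limsup_p J_p\le Q(u_\infty)$, which, combined with the liminf chain, proves $\lim_p J_p=Q(u_\infty)$ and $J_\infty\le Q(u_\infty)$ --- but \emph{not} $J_\infty=Q(u_\infty)$, nor $J_p\to J_\infty$. In the chain $J_\infty \le Q(u_\infty)\le \liminf J_p\le\limsup J_p\le Q(u_\infty)$ the two outer terms are not the same quantity, so "all quantities agree" is a non sequitur: a priori there could exist some other competitor $v\in K_\infty$ with $Q(v)<Q(u_\infty)$, i.e.\ the constrained minimizers $u_p$ could converge to a function that is \emph{not} a minimizer of the limit problem. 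Ruling this out is exactly the content of the limsup ($\Gamma$-limsup) inequality with respect to $J_\infty$, and it is the part your argument omits.

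The fix is what the paper does: apply Lemma \ref{lemma:sequence_w_p} to an \emph{arbitrary} nonnegative $u\in K_\infty$, obtaining $Q(u)=\lim_p Q(w_p)\ge \limsup_p J_p$ for every such $u$; then observe that $J_\infty=\inf\{Q(u):u\in K_\infty,\ u\ge0\}$ (replacing $u$ by $|u|$ leaves $Q$ unchanged and preserves membership in $K_\infty$), so taking the infimum gives $\limsup_p J_p\le J_\infty$. This closes the chain $J_\infty\le Q(u_\infty)\le\liminf J_p\le\limsup J_p\le J_\infty$, yielding simultaneously $J_p\to J_\infty$, $Q(u_\infty)=J_\infty$, and (as you correctly argue from convergence of the energies together with the $L^2$ convergence and weak $H^1$ convergence) the strong $H^1$ convergence. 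Note that the restriction to nonnegative competitors is needed precisely because the recovery construction of Lemma \ref{lemma:sequence_w_p} is stated for nonnegative $u$, which is the subtlety you flagged but deployed at the wrong point.
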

\begin{proof}
We already know that $u_p\rightharpoonup u_\infty$ in $H^1(B_1)$ (Lemma \ref{lemma:weak_convergence_u_p}) and that $u_\infty\in K_\infty$ (Lemma \ref{lemma:Linfty_norm_of_u_infty}). On one hand we have,
\begin{equation}\label{eq:inequalities_convergence_J_p}
J_\infty \leq Q(u_\infty) \leq \liminf_{p\to\infty}Q(u_p)=\liminf_{p\to\infty}J_p,
\end{equation}
where we used the lower semicontinuity of the $H^1$--norm with respect to weak convergence. In order to prove the reverse inequality, let $u \in K_\infty, u\geq0$ and let $w_p$ be the corresponding approximating sequence founded in Lemma \ref{lemma:sequence_w_p}. Then it holds
\begin{equation}\label{a15}
Q(u)=\lim_{p\to\infty}Q(w_p)\geq \limsup_{p\to\infty}J_p.
\end{equation}
Since $J_\infty$ can be equivalently characterized as $\inf\{Q(u):\ u\in K_\infty, \ u\geq0\}$, we have obtained that $J_p\to J_\infty$. As a consequence, the inequalities in \eqref{eq:inequalities_convergence_J_p} are in fact equalities, which implies $Q(u_\infty)=J_\infty$ and also the $H^1$--strong convergence.
\end{proof}

\sezione{Proof Theorem \ref{teo:main_theorem} when $\bar r\in(0,1)$}\label{sec:proof_main_theorem}
The variational characterization of $u_\infty$ proved in the previous section allows to derive the following.
\begin{lemma}\label{lemma:limit_profile}
There exists a unique $r_\infty\in[R_1,R_2]$ such that $u_\infty(r_\infty)=1$. Moreover, $u_\infty(r)$ solves
\begin{equation}\label{b1}
-u''-\frac{n-1}ru'+V(r)u=0, \qquad u'(0)=0
\end{equation}
in $(0,R_1)\cup(R_1,r_\infty)\cup(r_\infty,R_2)\cup(R_2,1)$.
\end{lemma}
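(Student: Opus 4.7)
The plan is to identify $u_\infty$ explicitly with a normalized Green function; from this characterization both the ODE and the uniqueness of $r_\infty$ will follow.

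I would first locate the maximum. Any radial $H^1$ function is continuous away from the origin, so $u_\infty$ is continuous on $[R_1,1]$. Lemma \ref{lemma:Linfty_norm_of_u_infty} gives $\|u_\infty\|_{L^\infty(A)}=1$, while the $K_\infty$ constraint enforces $|u_\infty|\le c<1$ on $[0,R_1]\cup[R_2,1]$; hence there exists at least one $r_\infty\in(R_1,R_2)$ with $u_\infty(r_\infty)=1$.

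For a fixed $r_0\in(0,1)$ I would consider the auxiliary variational problem
\begin{equation*}
\inf\{Q(u):\,u\in H^1_r(B_1),\ u(r_0)=1\}.
\end{equation*}
Since $Q$ is strictly convex and coercive on $H^1_r(B_1)$ (using $V\ge0,\ V\not\equiv0$) and the constraint cuts out a closed affine subspace, a unique minimizer exists. The Euler--Lagrange equation, via a Lagrange multiplier for the pointwise constraint, reads $\mathcal{L}u=\lambda\,\delta_{r_0}$ with natural boundary conditions $u'(0)=u'(1)=0$; hence the minimizer is $\lambda\,G(\cdot,r_0)$, and normalizing gives the unique minimizer $G(\cdot,r_0)/G(r_0,r_0)$, whose energy is $F(r_0)$ by Lemma \ref{lemma:relation_F_Q}. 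Applying this with $r_0=r_\infty$ and using $u_\infty(r_\infty)=1$, I obtain $Q(u_\infty)\ge F(r_\infty)$.

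Then I would close the chain of inequalities. Using the maximum principle for $\mathcal{L}$, the homogeneous solution $\phi_0$ with $\phi_0(0)=1$, $\phi_0'(0)=0$ is nondecreasing on $[0,1]$; this monotonicity, combined with condition \eqref{eq:property_c}, shows that $G(\cdot,\bar r)/G(\bar r,\bar r)\in K_\infty$, so $J_\infty\le F(\bar r)$. Since $\bar r$ is a global minimizer of $F$ on $[R_1,R_2]$ and $r_\infty\in[R_1,R_2]$,
\begin{equation*}
F(\bar r)\le F(r_\infty)\le Q(u_\infty)=J_\infty\le F(\bar r),
\end{equation*}
and all terms coincide. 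Uniqueness of the auxiliary minimizer then forces
\begin{equation*}
u_\infty=\frac{G(\cdot,r_\infty)}{G(r_\infty,r_\infty)},
\end{equation*}
which solves the homogeneous ODE classically on $(0,1)\setminus\{r_\infty\}$, in particular on each of the four intervals of the statement.

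Finally, uniqueness of $r_\infty$ comes for free: if $u_\infty(r_1)=u_\infty(r_2)=1$ for distinct $r_1,r_2\in[R_1,R_2]$, the argument above applied at both points yields $G(\cdot,r_1)/G(r_1,r_1)=u_\infty=G(\cdot,r_2)/G(r_2,r_2)$. Applying $\mathcal{L}$ produces the distributional identity $\delta_{r_1}/G(r_1,r_1)=\delta_{r_2}/G(r_2,r_2)$, which is impossible unless $r_1=r_2$. The delicate technical point will be verifying that $G(\cdot,\bar r)/G(\bar r,\bar r)\in K_\infty$; it relies on monotonicity properties of $\phi_0$ and the Green function that are collected in the Appendix.
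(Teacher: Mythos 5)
Your argument is correct, but it takes a genuinely different route from the paper. The paper proves this lemma by a purely local obstacle-type argument: it replaces $u_\infty$ on $(0,R_1)$ (and on $(R_2,1)$) by the solution of the ODE with matching boundary data, which stays below $c$ by the maximum principle, so the replacement lies in $K_\infty$ and minimality forces $u_\infty$ to solve the equation there; then it studies the contact set $X=\{r\in[R_1,R_2]:u_\infty(r)=1\}$, notes the equation holds off $X$, and rules out $\inf X<\sup X$ by an energy comparison with the ODE solution on $(R_1,r_M)$, which cannot be constant on an interval. In particular the paper's proof of this lemma never uses that $\bar r$ minimizes $F$, nor that $G(\cdot,\bar r)/G(\bar r,\bar r)\in K_\infty$; the identification of $u_\infty$ with the normalized Green function only comes afterwards, in Proposition \ref{prop:description_of_u_infty}. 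You instead front-load the global variational information: from the point-constrained auxiliary problem (whose unique minimizer is $G(\cdot,r_0)/G(r_0,r_0)$ with energy $F(r_0)$ by Lemma \ref{lemma:relation_F_Q}), the admissibility of $G(\cdot,\bar r)/G(\bar r,\bar r)$ in $K_\infty$, and $Q(u_\infty)=J_\infty$, you squeeze $F(\bar r)\le F(r_\infty)\le Q(u_\infty)=J_\infty\le F(\bar r)$ and conclude $u_\infty=G(\cdot,r_\infty)/G(r_\infty,r_\infty)$ by strict convexity; the ODE on the four intervals and the uniqueness of the contact point then come for free, and you have essentially absorbed Proposition \ref{prop:description_of_u_infty} as well (with Lemma \ref{d1} giving $r_\infty=\bar r$ if wanted). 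Your route is shorter and avoids the contact-set analysis; the paper's is more local and defers the admissibility check. One point you should make explicit: monotonicity of $\phi_0$ (i.e.\ of $\xi$) only controls $G(\cdot,\bar r)/G(\bar r,\bar r)$ on $[0,R_1]$; on $[R_2,1]$ you need that $\zeta$ (the branch with $\zeta'(1)=0$) is nonincreasing, which follows from $\left(r^{n-1}\zeta'\right)'=r^{n-1}V\zeta\ge 0$ together with $\zeta'(1)=0$, using the factorization of Lemma \ref{lemma:appendix_factorization_of_G}; this fact is not literally stated in the Appendix, though it is a one-line consequence of Lemma \ref{z}, and the paper itself asserts the admissibility with the same brevity in Proposition \ref{prop:description_of_u_infty}.
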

\begin{proof}
Let us first show that $u_\infty$ solves the equation in $(0,R_1)$. To this aim let $w$ be the solution of
\begin{equation}\label{b2}
\left\{ \begin{array}{ll}
-w''-\frac{n-1}{r} w'+V(r)w=0 &\text{ in }(0,R_1)\\
w'(0)=0,\ w(R_1)=u_\infty(R_1).
\end{array}\right.
\end{equation}
Hence $w$ minimizes the functional $Q(u)$ in $B_{R_1}$. Moreover the maximum principle ensures
$$0<w<c\quad\hbox{in }B_{R_1}.$$
If we define
\begin{equation}\label{b3}
\widetilde w=\left\{ \begin{array}{ll}
w &\text{ in }B_{R_1}\\
u_\infty&\text{ in }B_1\setminus B_{R_1},
\end{array}\right.
\end{equation}
then $\widetilde w\in K_\infty$ and $Q(\widetilde w)\le Q(u_\infty)$. This implies that $\widetilde w\equiv u_\infty$ in $B_R$ and so $u_\infty$ solves \eqref{b1} in $B_{R_1}$. One can proceed similarly in $B_1\setminus B_{R_2}$.

Set $X=\{r\in[R_1,R_2] : u_\infty(r)=1 \}$ and let $r_m=\inf X, r_M=\sup X$ ($X$ is not empty since $\|u_\infty\|_{L^\infty(A)}=1$). Notice that $u_\infty$ solves \eqref{b1} in the open set $[R_1,R_2]\setminus X$, since here the function does not touch the obstacle. In particular, by the maximum principle, the interval $[r_m,r_M]$ is contained in $X$. Let us end the proof by showing that $X$ is a singleton.
By contradiction assume that , let $w$ be the solution of
\begin{equation}\label{b4}
\left\{ \begin{array}{ll}
-w''-\frac{n-1}rw'+V(r)w=0 &\text{ in }(R_1,r_M)\\
u(R_1)=u_\infty(R_1),\ u(r_M)=1.
\end{array}\right.
\end{equation}
Since $u_\infty\equiv 1$ in $(r_m,r_M)$, whereas $w$ can not be constant in an interval by the strong maximum principle, we have that $Q(w)<Q(u_\infty)$ in the annulus $B_{r_M}\setminus B_{R_1}$, which leads again to a contradiction as before. Thus $X$ is a singleton.
\end{proof}
We deduce that $u_\infty(r)$ is regular in $A$, except for the point $r=r_\infty$, with different right and left derivatives. In the next proposition we show that $r_\infty=\bar r$ (recall that $\bar r$ is a local minimum point of $F$) and that $u_\infty$ coincides in fact with the normalized Green function $G(r,\bar r)$.
\begin{prop}\label{prop:description_of_u_infty}
We have that
\begin{equation}\label{b20}
u_\infty(r)=\frac{G(r,\bar r)}{G(\bar r,\bar r)}.
\end{equation}
\end{prop}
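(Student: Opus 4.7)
The plan is to squeeze $Q(u_\infty)=J_\infty$ between $F(r_\infty)$ from below and $F(\bar r)$ from above, combine this with the local-minimum property of $\bar r$, and invoke uniqueness in a Dirichlet principle to identify $u_\infty$ with the Green function.

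As a first step I would verify that the natural candidate $v^*:=G(\cdot,\bar r)/G(\bar r,\bar r)$ lies in $K_\infty$. The normalization $\|v^*\|_\infty=1$ follows from the pointwise bound $G(r,\bar r)\le G(\bar r,\bar r)$ recalled in Section~\ref{s0}. For the obstacle constraint $|v^*|\le c$ on $B_{R_1}\cup(B_1\setminus B_{R_2})$, I would establish that $G(\cdot,\bar r)$ is increasing on $(0,\bar r)$ and decreasing on $(\bar r,1)$: multiplying ${\cal L}G=0$ by $r^{n-1}$ gives $(r^{n-1}G_r(\cdot,\bar r))'=r^{n-1}V(r)G(\cdot,\bar r)\ge 0$, so $r^{n-1}G_r(\cdot,\bar r)$ is monotone on each side of $\bar r$, and the Neumann conditions at $0$ and $1$ pin down its sign. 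Consequently $G(\cdot,\bar r)$ attains its maximum on $[0,R_1]$ at $R_1$ and on $[R_2,1]$ at $R_2$, so \eqref{eq:property_c} places $v^*$ in $K_\infty$. Lemma~\ref{lemma:relation_F_Q} then yields
\[ J_\infty=Q(u_\infty)\le Q(v^*)=F(\bar r). \]

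For the reverse bound I would use a Dirichlet-type variational principle: Lemma~\ref{lemma:relation_F_Q} extends to the statement that $G(\cdot,r)/G(r,r)$ is the \emph{unique} minimizer of $Q$ over $\{u\in H^1_r(B_1):u(r)=1\}$, with minimum value $F(r)$. This is standard, once one observes that pointwise evaluation at $r\in(0,1)$ is a continuous linear functional on $H^1_r(B_1)$ (precisely because $G$ is bounded, as emphasized in the introduction), and writes the Euler--Lagrange condition as ${\cal L}u=\mu\delta_r$ with the Lagrange multiplier $\mu=1/G(r,r)$ determined by $u(r)=1$. Applied at $r=r_\infty$, and using $u_\infty(r_\infty)=1$ from Lemma~\ref{lemma:limit_profile}, this gives
\[ Q(u_\infty)\ge F(r_\infty), \]
with equality iff $u_\infty=G(\cdot,r_\infty)/G(r_\infty,r_\infty)$.

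Combining the two bounds gives $F(r_\infty)\le F(\bar r)$. Since $r_\infty\in[R_1,R_2]$ and $\bar r$ is a global minimum of $F$ on that interval, the reverse inequality $F(\bar r)\le F(r_\infty)$ is automatic, so equality holds throughout. The equality case of the Dirichlet principle then forces $u_\infty=G(\cdot,r_\infty)/G(r_\infty,r_\infty)$. Finally, $r_\infty$ is itself a global minimum of $F$ on $[R_1,R_2]$, so after possibly shrinking the window $[R_1,R_2]$ around $\bar r$ (allowed since $\bar r$ is a local minimum of $F$ in $(0,1)$) to make $\bar r$ the unique minimizer on $[R_1,R_2]$, we conclude $r_\infty=\bar r$. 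The main delicate point I expect is cleanly formulating the Dirichlet principle on the weighted radial space with a point constraint and handling the non-strict local-minimum case; the rest of the argument is a short energy comparison.
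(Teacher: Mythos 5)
Your proposal follows the same skeleton as the paper's proof: squeeze $Q(u_\infty)=J_\infty$ between $F(r_\infty)$ (from a point-constrained minimization principle, using $u_\infty(r_\infty)=1$ from Lemma \ref{lemma:limit_profile}) and $F(\bar r)$ (from testing with the normalized Green function, which lies in $K_\infty$), and then identify $u_\infty$ by uniqueness. Within that skeleton you make two variations that are worth noting. First, the paper merely asserts that $G(\cdot,\bar r)/G(\bar r,\bar r)\in K_\infty$ ``due to our choice of the constant $c$'' in \eqref{eq:property_c}; you actually justify it, by showing $(r^{n-1}G_r(\cdot,\bar r))'=r^{n-1}V G(\cdot,\bar r)\ge 0$ away from the pole and pinning the sign of $r^{n-1}G_r$ with the conditions at $0$ and $1$, so that the maximum of $G(\cdot,\bar r)$ on $B_{R_1}$ (resp.\ $B_1\setminus B_{R_2}$) is attained at $R_1$ (resp.\ $R_2$). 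Second, for the lower bound the paper uses the obstacle-type auxiliary problem $\inf\{Q(u):0\le u\le 1,\ u(r_\infty)=1\}$ and identifies its minimizer as in Lemma \ref{lemma:limit_profile}, while you minimize over the affine set $\{u\in H^1_r(B_1):\ u(r_\infty)=1\}$, using continuity of point evaluation for radial $H^1$ functions away from the origin, coercivity of $Q$ (from $V\ge0$, $V\not\equiv0$) and strict convexity; this is a cleaner route to $Q(u_\infty)\ge F(r_\infty)$ (the value being $F(r_\infty)$ by Lemma \ref{lemma:relation_F_Q}) and it gives the equality case — uniqueness of the minimizer — for free, which the paper leaves implicit.

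The one step that is not justified as written is the last one. From $F(r_\infty)=F(\bar r)$ you deduce $r_\infty=\bar r$ ``after possibly shrinking $[R_1,R_2]$ to make $\bar r$ the unique minimizer, allowed since $\bar r$ is a local minimum''. Local minimality alone does not allow this: if $F$ took the value $F(\bar r)$ at points accumulating at $\bar r$, no shrinking would make $\bar r$ the unique minimizer; moreover the shrinking must be done once and for all when the sets $K_p$, $K_\infty$ are defined in Section \ref{s0}, since $u_\infty$ and $r_\infty$ depend on the chosen $R_1,R_2,c$. The missing ingredient is exactly the paper's Lemma \ref{d1}: every local minimum point of $F$ is a strict local minimum, because $F=|\partial B_1|/(\xi\zeta)$ cannot be constant on an interval by \eqref{eq:xi'zeta-xizeta'}. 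With that lemma one chooses $R_1,R_2$ at the outset inside the strictness neighborhood, so that $F(\bar r)<F(r)$ for all $r\in[R_1,R_2]$, $r\ne\bar r$; your chain $F(r_\infty)\le Q(u_\infty)\le F(\bar r)$ then forces $r_\infty=\bar r$ directly, with no a posteriori shrinking. You flagged this ``non-strict minimum'' issue yourself but left it open; citing Lemma \ref{d1} closes it, and with that addition your argument is complete.
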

\begin{proof}
Let us first prove that $Q(u_\infty)\geq F(r_\infty)$, with $r_\infty$ given by the previous lemma. To this aim we consider the auxiliary problem
\begin{equation}\label{b5}
\inf\{ Q(u):\ u\in {H^1_r(B_1)}, \ 0\leq u\leq 1 \text{ in } B_1, \ u(r_\infty)=1\}.
\end{equation}
Arguing as in the previous lemma, we have that the function which achieves \eqref{b5} solves the problem
\begin{equation}\label{b6}
\left\{ \begin{array}{ll}
-u''-\frac{n-1}ru'+V(r)u=0 &\text{ in }(0,r_\infty)\cup(r_\infty,1)\\
u'(0)=0,\ u(r_\infty)=1,\ u'(1)=0,\ u\in H^1_r(B_1).
\end{array}\right.
\end{equation}
Since the function $G(r,r_\infty)/G(r_\infty,r_\infty)$ satisfies \eqref{b6}, then we derive that it minimizes \eqref{b5}. Moreover, $u_\infty$ belongs to the minimization set in \eqref{b5}, hence
\begin{equation}
Q(u_\infty)\geq Q\left(\frac{G(\cdot,r_\infty)}{G(r_\infty,r_\infty)}\right)=F(r_\infty)
\end{equation}
(the last equality comes from Lemma \ref{lemma:relation_F_Q}). Note that at this stage we still do dot know whether $G(r,r_\infty)/G(r_\infty,r_\infty)\in K_\infty$.

Next we show that $r_\infty=\bar r$. Assume not. We know that $r_\infty \in [R_1,R_2]$ and that $F(\bar r)<F(r)$ $\forall \ r\in[R_1,R_2]$, since $\bar r$ is strict local minimum point (see Lemma \ref{d1} in Appendix). Hence
\begin{equation}
Q(u_\infty)\geq F(r_\infty)>F(\bar r)=Q\left(\frac{G(\cdot,\bar r)}{G(\bar r,\bar r)}\right).
\end{equation}
Now, due to our choice of the constant $c$, we have that $G(r,\bar r)/G(\bar r,\bar r) \in K_\infty$, hence the last inequality gives a contradiction. We conclude that $r_\infty=\bar r$ and, in turn, that $u_\infty=G(r,\bar r)/G(\bar r,\bar r)$.
\end{proof}
\begin{proof}[Proof Theorem \ref{teo:main_theorem} when $\bar r\in(0,1)$]
Let us consider the function $u_p$ which minimizes $J_p$. Then, proceeding exactly as in Lemma 2.6 in \cite{P}, we have that
\begin{equation}\label{b21}
-\Delta u_p+V(|x|)u_p\le \lambda_pu_p^p\quad\hbox{in }B_1,
\end{equation}
and
\begin{equation}\label{b22}
-\Delta u_p+V(|x|)u_p=\lambda_pu_p^p\quad\hbox{in }A,
\end{equation}
where $\lambda_p$ is a Lagrange multiplier. We want to show that $|\lambda_p|\le C$ where $C$ is a positive constant independent of $p$.
First note that multiplying \eqref{b21} by $u_p$ and integrating in $B_1$ we immediately obtain that $\lambda_p$ is positive.  In order to get a bound from above to $\lambda_p$ let us choose $R_1< \overline R_1<r_\infty <\overline R_2 < R_2$ such that $\int_{\partial B_{\overline R_1}}|\nabla u_p|^2$ and $\int_{\partial B_{\overline R_2}}|\nabla u_p|^2$ are uniformly bounded in $p$ (this is possible since $\int_{B_1}|\nabla u_p|^2$  is uniformly bounded). Then multiply \eqref{b22} by $u_p$ and integrate in $\bar A=B_{\overline R_2} \setminus B_{\overline R_1}$. We get
\begin{equation}\label{b22bis}
\lambda_p\int_{\bar A}|u_p|^{p+1}\le J_p+\int_{\partial \bar A}\frac{\partial u_p}{\partial\nu}u_p
\le J_p+\left(\int_{\partial \bar A}|\nabla u_p|^2\right)^\frac12
\left(\int_{\partial \bar A}|u_p|^2\right)^\frac12.
\end{equation}
Now, proceeding as in Lemma \ref{lemma:mass_concentration_in_the_annulus}, one can show that $\int_{\bar A} u_p^{p+1}\rightarrow1$ as $p\to\infty$ (since $\overline R_1 <r_\infty<\overline R_2$). Hence \eqref{b22bis} implies that $\lambda_p\le C$, for some constant $C$ independent of $p$.

Finally we claim that
\begin{equation}\label{b23}
u_p\rightarrow u_\infty\quad\hbox{as }p\rightarrow\infty\hbox{ in }L^\infty(B_1).
\end{equation}
Notice first that $u_p\to u_\infty$ in $L^\infty(B_1\setminus B_\epsilon)$ for any $\epsilon>0$, because of the embedding of $H^1_r(B_1\setminus B_\epsilon)$ into $L^\infty(B_1\setminus B_\epsilon)$. Recalling the equation satisfied by $u_\infty$ in $B_{R_1}$, we get from \eqref{b21} that
\begin{equation}\label{b24}
-\Delta\left(u_p-u_\infty\right)+V(x)\left(u_p-u_\infty\right)\le \lambda_pu_p^p\quad\hbox{in }B_{R_1}.
\end{equation}
By known regularity results (see for example Theorem 9.1 in \cite{GT}), we have that
\begin{equation}\label{b25}
||u_p-u_\infty||_{L^\infty(B_{R_1})}\le ||u_p-u_\infty||_{L^\infty(\partial B_{R_1})}+C||\lambda_pu_p^p||_{L^n(B_{R_1})}.
\end{equation}
Being $u_p<1$ in $B_{R_1}$ and $\{\lambda_p\}$ a bounded sequence, we deduce that the right hand side in the previous inequality converges to zero as $p\to\infty$, hence \eqref{b23} is proved.

Now, as a consequence of Proposition \ref{prop:description_of_u_infty}, of the uniform convergence and of the choice of $c$ in \eqref{eq:property_c}, we deduce that
\begin{equation}\label{b27}
u_p<c\quad\hbox{in }B_{R_1}\cup (B_1\setminus B_{R_2}).
\end{equation}
Hence $u_p$ solves
\begin{equation}\label{b28}
-\Delta u_p+V(|x|) u_p=\lambda_p u_p^p \quad \text{in } B_1,
\end{equation}
for a Lagrange multiplier $\lambda_p>0$. A suitable multiple of $u_p$ provides a solution to \eqref{eq:main_nonlinear_equation_neumann}.
\end{proof}

Being $\bar r$ a critical point of $F(r)$, we have that the function $G(r,\bar r)$ verifies the following interesting $reflection\ principle$.
\begin{prop}\label{prop:reflection_principle}
We have that
\begin{equation}\label{c1}
\lim\limits_{r\rightarrow\bar r^-}G_r(r,\bar r)=
-\lim\limits_{r\rightarrow\bar r^+}G_r(r,\bar r)=\frac{1}{2},
\end{equation}
where $G_r(r,s)$ denotes the derivative with respect to $r$.
\end{prop}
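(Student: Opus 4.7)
The plan is to use the Sturm--Liouville representation of $G$ via two fundamental solutions of $\mathcal{L}u=0$ and reduce the reflection identity to the critical-point condition $F'(\bar r)=0$.

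First I would introduce two canonical solutions (each unique up to positive scaling): let $u_0$ be the positive solution of $\mathcal{L}u=0$ on $(0,1)$ with $u_0'(0)=0$, and $u_1$ the positive solution with $u_1'(1)=0$. A short computation from $u_0\mathcal{L}u_1-u_1\mathcal{L}u_0=0$ shows that their Wronskian $W:=u_0u_1'-u_0'u_1$ satisfies $(r^{n-1}W(r))'\equiv 0$, so $r^{n-1}W(r)=-C_0$ for some constant, which is positive by the strong maximum principle applied to $u_0$ (that argument yields $u_0$ strictly increasing on $(0,1)$, hence $W(1)=-u_0'(1)u_1(1)<0$). The standard ODE Green's function construction then gives
\begin{equation*}
G(t,s)=\frac{s^{n-1}}{C_0}\begin{cases}u_0(t)u_1(s),&0\leq t\leq s,\\ u_0(s)u_1(t),&s\leq t\leq 1,\end{cases}
\end{equation*}
which one verifies directly from continuity at $t=s$ together with the jump $G_t(s^-,s)-G_t(s^+,s)=1$ (obtained by integrating $\mathcal{L}_t G(\cdot,s)=\delta_s$ across $t=s$).

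With this formula in hand the rest is algebra. Restricting to the diagonal gives $G(r,r)=r^{n-1}u_0(r)u_1(r)/C_0$, hence $F(r)=|\partial B_1|C_0/(u_0(r)u_1(r))$, so the hypothesis $F'(\bar r)=0$ is equivalent to $(u_0u_1)'(\bar r)=0$, i.e.
\begin{equation*}
u_0'(\bar r)u_1(\bar r)+u_0(\bar r)u_1'(\bar r)=0.
\end{equation*}
Differentiating the piecewise formula for $G$ with respect to the first argument yields
\begin{equation*}
\lim_{r\to\bar r^-}G_r(r,\bar r)=\frac{\bar r^{n-1}u_0'(\bar r)u_1(\bar r)}{C_0},\qquad\lim_{r\to\bar r^+}G_r(r,\bar r)=\frac{\bar r^{n-1}u_0(\bar r)u_1'(\bar r)}{C_0}.
\end{equation*}
The critical-point relation forces the sum of these two limits to vanish, while their difference equals $-\bar r^{n-1}W(\bar r)/C_0=1$; solving this $2\times 2$ linear system yields the claimed values $1/2$ and $-1/2$.

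The only step that really requires care is the positivity of $C_0$, which follows from monotonicity of $u_0$ by the strong maximum principle and should already be among the properties of $G$ recorded in the Appendix. Everything else is pure algebra applied to the explicit Sturm--Liouville representation, so the proof is both short and self-contained once the representation is in place.
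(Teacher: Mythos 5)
Your argument is correct and is essentially the paper's own proof: your $u_0,u_1$ and the constant $C_0$ are exactly the solutions $\xi,\zeta$ of Lemma \ref{z} (normalized so that $C_0=1$ via \eqref{eq:xi'zeta-xizeta'}), your piecewise formula for $G$ is Lemma \ref{lemma:appendix_factorization_of_G}, and the final step (critical point of $F$ makes the sum of the one-sided derivatives vanish, the Wronskian normalization makes their difference equal to $1$) is precisely the computation in the paper. The only difference is that you rederive the factorization and Wronskian facts instead of quoting the Appendix, which is harmless.
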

\begin{proof}
Lemma \ref{lemma:appendix_factorization_of_G} gives $F(r)=|\partial B_1|/[\xi(r)\zeta(r)]$. Hence, being $\bar r$ a critical point of $F(r)$, we have
\begin{equation}\label{c5}
\xi'(\bar r)\zeta(\bar r)+\xi(\bar r)\zeta'(\bar r)=0.
\end{equation}
On the other hand, again by Lemma \ref{lemma:appendix_factorization_of_G} we have that
\begin{equation}\label{c6}
\lim\limits_{r\rightarrow\bar r^-}G_r(r,\bar r)=\bar r^{n-1}\xi'(\bar r)\zeta(\bar r), \qquad
\lim\limits_{r\rightarrow\bar r^+}G_r(r,\bar r)=\bar r^{n-1}\xi(\bar r)\zeta'(\bar r),
\end{equation}
which, together with \eqref{c5}, gives the first equality. In order to obtain the value of the left derivative it is enough to combine \eqref{c5} with \eqref{eq:xi'zeta-xizeta'} at $\bar r$.
\end{proof}

\sezione{The case $\bar r=1$}\label{sec:proof_teo_Neumann_bar_r=1}
In this section we conclude the proof of Theorem \ref{teo:main_theorem}, dealing with the case $\bar r=1$, and we prove Theorem \ref{teo:Neumann_bar_r=1}. Let us start by showing that Lemma \ref{lemma:relation_F_Q} still holds at $r=1$. Recall that $G(r,1)$ is well defined (Lemma \ref{lemma:appendix_factorization_of_G} and Lemma \ref{z}) and it is the punctual limit of $G(r,s)$ as $s\to1$.
\begin{lemma}\label{lemma:relation_F_Q_generalization}
We have that
\begin{equation}
F(1)=Q\left(\frac{G(\cdot,1)}{G(1,1)}\right).
\end{equation}
\end{lemma}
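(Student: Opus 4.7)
The plan is to bypass the distributional identity used in the proof of Lemma~\ref{lemma:relation_F_Q} — which becomes delicate when the Dirac mass sits at the boundary — and replace it by a direct integration by parts that exploits the explicit factorization of $G$ provided in the appendix. By Lemma~\ref{lemma:appendix_factorization_of_G}, we have $G(r,1)=\xi(r)\zeta(1)$ for every $r\in[0,1]$, where $\xi$ solves ${\cal L}\xi=0$ on $(0,1)$ with $\xi'(0)=0$. Consequently $g(r):=G(r,1)/G(1,1)=\xi(r)/\xi(1)$ is a smooth classical solution of $-g''-\frac{n-1}{r}g'+V(r)g=0$ on $[0,1]$, with $g'(0)=0$ and $g(1)=1$; in particular, no interior singularity needs to be handled.

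Next I would multiply this ODE by $r^{n-1}g$ and integrate on $(0,1)$, exactly as in the proof of Lemma~\ref{lemma:relation_F_Q}. Rewriting the leading terms as $-(r^{n-1}g')'g$ and integrating by parts, the contribution at $r=0$ vanishes thanks to $g'(0)=0$ (together with the factor $r^{n-1}$, $n\ge 2$), while the one at $r=1$ contributes $-g'(1)g(1)=-g'(1)$. This produces
$$\int_0^1 r^{n-1}\bigl[(g')^2+V(r)g^2\bigr]\,dr=g'(1),$$
and passing to polar coordinates gives $Q(g)=|\partial B_1|\,g'(1)$. Morally this is the boundary counterpart of the interior computation: the right-hand side $r^{n-1}G(r,r)$ coming from $\delta_r$ when $r\in(0,1)$ is replaced, for $r=1$, by a genuine boundary contribution, since the Neumann condition at $r=1$ is broken in the limit $s\to 1^-$.

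The final step is to identify $g'(1)$ with $1/G(1,1)$, and this is the only point where I foresee any real work. Using $g'(1)=\xi'(1)/\xi(1)$, the Wronskian-type identity \eqref{eq:xi'zeta-xizeta'} evaluated at $r=1$, together with the Neumann condition $\zeta'(1)=0$ recorded in Lemma~\ref{lemma:appendix_factorization_of_G}, should give $\xi'(1)\zeta(1)=1$, whence $g'(1)=1/(\xi(1)\zeta(1))=1/G(1,1)$. Inserting this into $Q(g)=|\partial B_1|\,g'(1)$ yields $Q(g)=|\partial B_1|/G(1,1)=F(1)$, as desired. The main item to verify before writing the proof in full is the precise normalization (and sign convention) of \eqref{eq:xi'zeta-xizeta'} at $r=1$.
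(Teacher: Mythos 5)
Your proposal is correct and follows essentially the same route as the paper: both use the factorization $G(r,1)=\xi(r)\zeta(1)$ from Lemma \ref{lemma:appendix_factorization_of_G}, multiply the ODE by $r^{n-1}$ times the (normalized) solution and integrate by parts to reduce $Q$ to the boundary term $\xi'(1)\xi(1)$, and then conclude via \eqref{eq:xi'zeta-xizeta'} at $r=1$ together with $\zeta'(1)=0$, which indeed gives $\xi'(1)\zeta(1)=1$ with the paper's normalization. The only cosmetic difference is that you work with $g=\xi/\xi(1)$ instead of $\xi$ itself.
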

\begin{proof}
It comes from Lemma \ref{lemma:appendix_factorization_of_G} that $G(r,1)/G(1,1)$ is well defined (since $\zeta(1)\neq0$) and moreover $G(r,1)/G(1,1)=\xi(r)/\xi(1)$. In order to evaluate the energy of this function, let us write down the equation satisfied by $\xi$ and multiply by $r^{n-1}\xi$. We have,
\begin{equation}
V(r)\xi^2 r^{n-1}=r^{n-1}\xi''\xi+(n-1) r^{n-2}\xi'\xi =\frac{d}{dr}\left( r^{n-1}\xi'\xi \right) - r^{n-1}(\xi')^2,
\end{equation}
and hence
\begin{equation}
\int_0^1 \left[(\xi')^2 + V(r) \xi^2 \right]r^{n-1} dr =\xi'(1) \xi(1).
\end{equation}
As a consequence it holds
\begin{equation}
Q\left(\frac{G(\cdot,1)}{G(1,1)}\right)=\frac{|\partial B_1| \xi'(1)}{\xi(1)}=\frac{|\partial B_1| \xi'(1)\zeta(1)}{\xi(1)\zeta(1)}=
\frac{|\partial B_1|}{\xi(1)\zeta(1)}=F(1),
\end{equation}
where we used \eqref{eq:xi'zeta-xizeta'} at $r=1$ and that $\zeta'(1)=0$.
\end{proof}
\begin{proof}[Proof of Theorem \ref{teo:main_theorem} when $\bar r=1$]
This is analogous to the case $\bar r\in(0,1)$. Choose $R_1$ and $c$ in such a way that $1$ is a global minimum point in $[R_1, 1]$ and
\begin{equation}
\frac{G(R_1,1)}{G(1,1)}<c<1.
\end{equation}
In analogy with Section \ref{s0} we set
\begin{equation*}
K_p=\left\{u\in H^1_r(B_1):\ \left(|B_1|^{-1}\int_{B_1} |u|^{p+1}\right)^{\frac{1}{p+1}}=1 \ \hbox{ and }\ |u|\leq c \text{ in } B_{R_1} \right\},
\end{equation*}
and we define correspondingly $J_p$ and $J_\infty$. Thanks to Lemma \ref{lemma:relation_F_Q_generalization}, it is possible to proceed as in Sections \ref{sec:existence_convergence_minimizers}, \ref{sec:proof_main_theorem} with minor changes. This provides the existence of a nonconstant solution of the Neumann problem for $p$ sufficiently large.
\end{proof}

\begin{proof}[Proof of Theorem \ref{teo:Neumann_bar_r=1}] It is sufficient so show that $\bar r=1$ is a local minimum point of $F(r)$, for every choice of $V(|x|)\geq0$, $V\not\equiv0$. To this aim we compute the derivatice of $F$ as follows
\begin{equation}
F'(r)=-|\partial B_1|\cdot \frac{\xi'(r)\zeta(r)+\xi(r)\zeta'(r)}{\left(\xi(r)\zeta(r)\right)^2}=
-|\partial B_1|\cdot \frac{\xi'(r)\zeta(r)\cdot 1+\xi(r)\zeta'(r)\cdot 1}{\left(\xi(r)\zeta(r)\right)^2}.
\end{equation}
Using \eqref{eq:xi'zeta-xizeta'} we have that $1=r^{n-1}\xi'(r)\zeta(r)-r^{n-1}\xi(r)\zeta'(r)$, which, substituted in the previous equality, gives
\begin{equation}
F'(r)=r^{n-1} |\partial B_1| \cdot \frac{\left(\xi(r)\zeta'(r)\right)^2-\left(\xi'(r)\zeta(r)\right)^2}{\left(\xi(r)\zeta(r)\right)^2}.
\end{equation}
Now, the boundary condition gives $\zeta'(1)=0$, whereas \eqref{eq:xi'zeta-xizeta'} at $r=1$ implies $\xi'(1)\zeta(1)\neq0$, therefore
\begin{equation}
\lim_{r\to 1^-} F'(r)=-|\partial B_1| \cdot \frac{\left(\xi'(1)\zeta(1)\right)^2}{\left(\xi(1)\zeta(1)\right)^2}<0
\end{equation}
which concludes the proof.
\end{proof}

\sezione{Appendix}
We collect some properties of the Green function $G(r,s)$.
\begin{lemma}\label{z}
Let $V(|x|)\geq0$, $V\not\equiv0$ be a smooth radial function in $B_1$. There exist linearly independent solutions $\xi,\zeta \in C^2((0,1])$ of the equation
\begin{equation}\label{eq:homogeneous_equation_appendix}
-u''(r)-\frac{n-1}{r}u'(r) +V(r)u(r)=0, \quad u>0,
\end{equation}
satisfying $\xi'(0)=\zeta'(1)=0$ and enjoying the additional property
\begin{equation}\label{eq:xi'zeta-xizeta'}
\xi'(r)\zeta(r)-\xi(r)\zeta'(r)\equiv \frac{1}{r^{n-1}}, \qquad r\in (0,1].
\end{equation}
The same result holds in case of Dirichlet boundary conditions at $r=1$, that is $\xi'(0)=\zeta(1)=0$.
\end{lemma}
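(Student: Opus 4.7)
My plan is to construct $\xi$ and $\zeta$ separately as solutions of initial value problems for the ODE, and then pick the initial data of $\zeta$ so that the Wronskian identity \eqref{eq:xi'zeta-xizeta'} holds automatically. I first observe that the equation can be written in self-adjoint form as $(r^{n-1} u')' = r^{n-1} V(r) u$, which makes the sign arguments transparent. For $\xi$, I would pose the IVP at $r=0$ with $\xi(0)=1$, $\xi'(0)=0$. Although the equation is singular at the origin, the equivalent fixed-point formulation
\begin{equation*}
\xi(r) = 1 + \int_0^r \rho^{1-n} \int_0^\rho s^{n-1} V(s) \xi(s) \, ds \, d\rho
\end{equation*}
defines a contraction on $C([0,\delta])$ for small $\delta>0$ (the double integral is $O(\delta^2)$ in sup-norm), giving a local $C^2$ solution which extends to $(0,1]$ by standard linear ODE theory on $[\delta,1]$. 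Since $V\ge 0$, the relation $(r^{n-1}\xi')' = r^{n-1}V\xi \ge 0$ combined with $\xi'(0)=0$ yields $r^{n-1}\xi'(r)\ge 0$, so $\xi$ is nondecreasing and $\xi\ge 1$ on $[0,1]$. Integrating this same relation from $0$ to $1$ gives $\xi'(1) = \int_0^1 s^{n-1}V(s)\xi(s)\,ds > 0$, using $V\not\equiv 0$.

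For the Neumann case I would then define $\zeta$ by solving the IVP at $r=1$ with
\begin{equation*}
\zeta(1)=\frac{1}{\xi'(1)}, \qquad \zeta'(1)=0,
\end{equation*}
which exists on $(0,1]$ by linear ODE theory (the coefficients are smooth on each $[\varepsilon,1]$). Reversing the monotonicity argument: $(r^{n-1}\zeta')' \ge 0$ together with $\zeta'(1)=0$ forces $r^{n-1}\zeta'(r)\le 0$ for all $r\in(0,1]$, so $\zeta$ is nonincreasing in $r$ and stays $\ge \zeta(1)>0$ throughout. Finally, for any two solutions $u,v$ of the equation a direct calculation using the ODE shows
\begin{equation*}
\bigl(r^{n-1}(u'v-uv')\bigr)' = r^{n-1}(u''v - uv'') + (n-1)r^{n-2}(u'v-uv') \equiv 0,
\end{equation*}
so $r^{n-1}W(\xi,\zeta)$ is constant. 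Evaluating at $r=1$ gives $W(\xi,\zeta)(1) = \xi'(1)\zeta(1) - \xi(1)\zeta'(1) = 1$, proving \eqref{eq:xi'zeta-xizeta'}; nonvanishing of $W$ gives the linear independence.

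For the Dirichlet variant I would keep the same $\xi$ and instead choose $\zeta$ as the solution of $\zeta(1)=0$, $\zeta'(1)=-1/\xi(1)$, which again exists on $(0,1]$. The Wronskian at $r=1$ now reads $\xi'(1)\cdot 0 - \xi(1)\cdot(-1/\xi(1)) = 1$, so \eqref{eq:xi'zeta-xizeta'} again follows from constancy of $r^{n-1}W$. Positivity of $\zeta$ on $(0,1)$ follows once more from $(r^{n-1}\zeta')'\ge 0$: since $\zeta'(1)<0$, we have $r^{n-1}\zeta'(r) \le -1/\xi(1)<0$ on $(0,1]$, so $\zeta$ is strictly decreasing and, starting from $\zeta(1)=0$, strictly positive for $r<1$. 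The only genuinely delicate step in the whole plan is justifying the existence of a $C^2$ solution $\xi$ at the regular singular point $r=0$; everything else is an elementary application of linear ODE theory, the self-adjoint form of the equation, and the sign hypothesis on $V$.
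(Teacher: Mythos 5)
Your construction is correct, but it takes a genuinely different route from the paper's. The paper adapts Catrina's argument: it changes variables $s=r^{2-n}$, which turns the radial operator into $-\tilde u''+\frac{s^{(2n-2)/(2-n)}}{(n-2)^2}\tilde V\tilde u=0$ on $[1,\infty)$, constructs the first solution $\varphi$ through an integral (fixed-point) equation posed at $s=\infty$, and then produces the second solution by the explicit reduction-of-order formula $\psi(s)=\varphi(s)\bigl[-1/(\varphi(1)\varphi'(1))+\int_1^s\varphi(t)^{-2}\,dt\bigr]$, the Wronskian normalization and the Neumann condition $\psi'(1)=0$ being built into that formula; finally it sets $\xi(r)=\varphi(r^{2-n})/(n-2)$, $\zeta(r)=\psi(r^{2-n})$. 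You instead stay in the variable $r$: you build $\xi$ by a contraction argument at the singular endpoint $r=0$ (your integral equation and the $O(\delta^2)$ estimate are exactly what is needed there), build $\zeta$ by a backward initial value problem at $r=1$ whose data are chosen so that the Wronskian equals $1$ at $r=1$, and propagate the normalization via the identity $\bigl(r^{n-1}(\xi'\zeta-\xi\zeta')\bigr)'\equiv 0$, with positivity read off from the self-adjoint form $(r^{n-1}u')'=r^{n-1}Vu$. Your route is more self-contained (no appeal to Catrina's appendix), treats the Neumann and Dirichlet cases in a completely parallel way, and, unlike the substitution $s=r^{2-n}$, works verbatim for every $n\ge2$, $n=2$ included; the paper's route, in exchange, delivers the second solution by an explicit formula in terms of the first, with the normalization automatic. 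One small point you should patch: when you deduce $r^{n-1}\xi'\ge0$ from $(r^{n-1}\xi')'=r^{n-1}V\xi\ge0$ (and likewise the sign of $r^{n-1}\zeta'$ in both the Neumann and Dirichlet cases), the inequality on the right already presupposes the sign of the solution you are trying to establish; this is closed by the standard continuity argument on the maximal interval up to a hypothetical first zero, which yields a contradiction, so it is a one-line fix rather than a gap.
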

\begin{proof}
In case of Dirichlet boundary conditions the result is proved by Catrina in \cite{C}, Appendix. Let us adapt the proof to the case of Neumann
boundary conditions. Let $s=r^{2-n}$ and $u(r)=\tilde u(r^{2-n})$, $V(r)=\tilde V(r^{2-n})$, then \eqref{eq:homogeneous_equation_appendix} transforms into
\begin{equation}\label{eq:transformed_eq_appendix}
-\tilde u''(s)+\frac{s^\frac{2n-2}{2-n}}{(n-2)^2} \tilde V(s)\tilde u(s)=0, \qquad s\in [1,\infty).
\end{equation}
Catrina provides, via an approximation method, a positive function $\vphi(s)$ which satisfies
\begin{equation}
\vphi(s)=1+\int_s^\infty\left( 1-\frac{s}{t} \right) \frac{\tilde V(t)}{(n-2)^2}\vphi(t)\ t^\frac{n}{2-n} \ dt,
\end{equation}
and hence solves \eqref{eq:transformed_eq_appendix}. The function $\xi(r)=\vphi(r^{2-n})/(n-2)$ solves \eqref{eq:homogeneous_equation_appendix} and moreover, as shown in \cite{C}, $\lim_{r\to 0^+} \xi'(r)=0$. Next we set
\begin{equation}
\psi(s)=\vphi(s)\left\{-\frac{1}{\vphi(1)\vphi'(1)}+\int_1^s\frac{1}{\vphi^2(t)} \ dt\right\},
\end{equation}
which is well defined since
\begin{equation}
\vphi'(1)=\vphi(1)-1=\int_1^\infty\left(1-\frac{1}{t}\right)\frac{\tilde V(t)}{(n-2)^2}\vphi(t) \ t^\frac{n}{2-n} \ dt\ >0,
\end{equation}
by the assumption $V\geq0$, $V\not\equiv0$. A direct calculation shows that
\begin{equation}
\psi'(1)=0 \quad\text{ and }\quad \vphi(s)\psi'(s)-\vphi'(s)\psi(s)\equiv 1 \quad s\in[1,\infty),
\end{equation}
hence the pair $\zeta(r)=\psi(r^{2-n})$, $\xi(r)$ satisfies \eqref{eq:xi'zeta-xizeta'} and the lemma is proved.
\end{proof}
A straightforward consequence of this result is the following factorization of the Green function.
\begin{lemma}\label{lemma:appendix_factorization_of_G}
In the assumptions of the previous lemma it holds
\begin{equation}
G(r,s)=\left\{ \begin{array}{ll}
         s^{n-1}\xi(r)\zeta(s) \quad &\text{for } \ r\leq s\\
	 s^{n-1}\xi(s)\zeta(r) \quad &\text{for } \ r> s,
        \end{array}
\right.
\end{equation}
accordingly to the choice of the boundary conditions.
\end{lemma}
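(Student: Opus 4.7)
The plan is to verify that the function defined by the right-hand side of the claimed formula, call it $\tilde G(r,s)$, satisfies all the defining properties of the Green function of $\mathcal{L}$ with the prescribed boundary conditions, so that by uniqueness $\tilde G = G$. Concretely, I need $\tilde G(\cdot,s)$ to lie in a suitable function space, to satisfy the homogeneous equation away from $r=s$, to obey the boundary conditions at $r=0$ and $r=1$, and to exhibit the correct jump in the first derivative at $r=s$ so that $\mathcal{L}_r \tilde G(\cdot,s) = \delta_s$ in $\mathcal{D}'(0,1)$.

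First I would deal with the easy properties. Since $\xi,\zeta\in C^2((0,1])$ solve the homogeneous equation, the restrictions of $\tilde G(\cdot,s)$ to $(0,s)$ and to $(s,1)$ are $C^2$ and satisfy $\mathcal{L} \tilde G = 0$ classically on each piece. The two branches of the formula match at $r=s$ with common value $s^{n-1}\xi(s)\zeta(s)$, so $\tilde G(\cdot,s)$ is continuous on $(0,1]$. The boundary condition at the origin, $\tilde G_r(0^+,s)=0$, follows from $\xi'(0)=0$, since for $r<s$ we have $\tilde G_r(r,s)=s^{n-1}\xi'(r)\zeta(s)$. At $r=1$ we use the second branch $\tilde G(r,s)=s^{n-1}\xi(s)\zeta(r)$: in the Neumann case $\zeta'(1)=0$ gives $\tilde G_r(1,s)=0$, and in the Dirichlet case $\zeta(1)=0$ gives $\tilde G(1,s)=0$.

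The crucial step is computing the jump of the radial derivative at $r=s$. From the two branches,
\[
\tilde G_r(s^-,s)=s^{n-1}\xi'(s)\zeta(s),\qquad \tilde G_r(s^+,s)=s^{n-1}\xi(s)\zeta'(s),
\]
so the Wronskian identity \eqref{eq:xi'zeta-xizeta'} yields
\[
\tilde G_r(s^+,s)-\tilde G_r(s^-,s)=s^{n-1}\bigl[\xi(s)\zeta'(s)-\xi'(s)\zeta(s)\bigr]=-1.
\]
Since $\tilde G(\cdot,s)$ is continuous on $(0,1)$ and smooth off $s$, standard distribution theory gives $\tilde G_{rr}=(\tilde G_{rr})_{\mathrm{cl}}+[\tilde G_r(s^+,s)-\tilde G_r(s^-,s)]\delta_s=(\tilde G_{rr})_{\mathrm{cl}}-\delta_s$ in $\mathcal{D}'(0,1)$. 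Combining this with the classical equation on $(0,s)\cup(s,1)$,
\[
\mathcal{L}_r\tilde G(\cdot,s)=-\tilde G_{rr}-\frac{n-1}{r}\tilde G_r+V\tilde G=\delta_s
\]
in the sense of distributions, which is precisely the defining property of the Green function.

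The only real subtlety is that the weight $s^{n-1}$ in the definition of $\tilde G$ is not arbitrary: it is exactly the normalization needed so that the Wronskian relation \eqref{eq:xi'zeta-xizeta'}, which carries the factor $1/r^{n-1}$, produces a unit-mass Dirac at $s$. Once this normalization is in place the argument is the standard Sturm--Liouville construction built from two independent homogeneous solutions, each selected to satisfy one of the two boundary conditions, and the uniqueness of the Green function for $\mathcal{L}$ with the chosen boundary conditions closes the proof in both the Neumann and the Dirichlet case simultaneously.
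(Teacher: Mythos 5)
Your proof is correct: the paper gives no argument for this lemma (it is stated as a ``straightforward consequence'' of the preceding lemma), and your verification --- homogeneous equation off the diagonal, boundary conditions from $\xi'(0)=0$ and $\zeta'(1)=0$ (resp.\ $\zeta(1)=0$), continuity at $r=s$, and the jump $-1$ in $\tilde G_r$ produced by the Wronskian normalization, followed by uniqueness --- is exactly the standard Sturm--Liouville construction the authors have in mind, consistent with the normalization $\mathcal{L}_t G(\cdot,s)=\delta_s$ used in the proof of Lemma \ref{lemma:relation_F_Q}. The only point you leave implicit is uniqueness of the Green function, which here follows immediately from $V\geq0$, $V\not\equiv0$: the difference of two Green functions solves the homogeneous problem with the given boundary conditions and must vanish.
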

\begin{lemma}\label{d1}
Let $\bar r$ be a local minimum point of the function $F(r)$. Then $\bar r$ is a strict local minimum point.
\end{lemma}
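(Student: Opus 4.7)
The plan is to translate the statement via the factorization in Lemma~\ref{lemma:appendix_factorization_of_G}: writing $F(r) = |\partial B_1|/h(r)$ with $h(r) := \xi(r)\zeta(r)$, the assertion becomes that $\bar r$ is a strict local \emph{maximum} of $h$, with strictness transferring from $h$ to $F$ since $h>0$. The first step is to derive a closed nonlinear second order ODE for $h$: from the Wronskian identity \eqref{eq:xi'zeta-xizeta'} together with $h' = \xi'\zeta + \xi\zeta'$ one extracts $\xi'\zeta = (h' + r^{1-n})/2$, $\xi\zeta' = (h' - r^{1-n})/2$, and hence $4h\,\xi'\zeta' = (h')^2 - r^{2-2n}$. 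Substituting this together with $\xi'' = V\xi - (n-1)\xi'/r$ and the analogous equation for $\zeta$ into $h'' = \xi''\zeta + 2\xi'\zeta' + \xi\zeta''$ gives
\[
 2h\, h'' + \frac{2(n-1)}{r}\, h\, h' - 4V h^2 - (h')^2 + \frac{1}{r^{2n-2}} = 0.
\]

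At the critical point $\bar r$ one has $h'(\bar r)=0$, so the ODE collapses to $h''(\bar r) = 2V(\bar r)\, h(\bar r) - 1/(2\bar r^{2n-2} h(\bar r))$. In the main subcase $h''(\bar r) < 0$, equivalently $V(\bar r) < 1/(4\bar r^{2n-2} h(\bar r)^2)$, the second order Taylor expansion of $h$ at $\bar r$ directly yields a strict local maximum of $h$, and hence a strict local minimum of $F$.

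The part I expect to be the main obstacle is the degenerate case $h''(\bar r)=0$, i.e.\ $V(\bar r) = 1/(4\bar r^{2n-2} h(\bar r)^2)$. The strategy there is by contradiction: if $\bar r$ were not strict, there would exist a sequence $r_n \to \bar r$, $r_n \ne \bar r$, with $h(r_n) = h(\bar r)$, and each $r_n$ would itself be a local maximum of $h$, so iterated Rolle's theorem would force $h^{(k)}(\bar r) = 0$ for every $k \ge 1$. Plugging this flatness into the nonlinear ODE at $\bar r$ and its successive derivatives forces every Taylor coefficient of $V$ at $\bar r$ to coincide with those of the explicit function $\widetilde V(r) := 1/(4 r^{2n-2} h(\bar r)^2)$. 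To close the argument one exploits the monotonicities of $\xi$ (nondecreasing, from $(r^{n-1}\xi')' = r^{n-1} V\xi \ge 0$ together with $\xi'(0)=0$) and of $\zeta$ (nonincreasing, by the symmetric argument using $\zeta'(1)=0$), combined with the strict positivity of the Wronskian in \eqref{eq:xi'zeta-xizeta'}, to preclude the flat scenario via the rigidity it imposes globally on $\xi$ and $\zeta$.
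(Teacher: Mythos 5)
Your reduction is correct as far as it goes: with $h=\xi\zeta$, the Wronskian identity \eqref{eq:xi'zeta-xizeta'} does give $4h\,\xi'\zeta'=(h')^2-r^{2-2n}$, the ODE $2hh''+\tfrac{2(n-1)}{r}hh'-4Vh^2-(h')^2+r^{2-2n}=0$ is right, and the nondegenerate case $h''(\bar r)<0$ is settled by Taylor expansion exactly as you say. The iterated Rolle argument is also fine: a non-strict local maximum of $h$ at $\bar r$ forces $h^{(k)}(\bar r)=0$ for all $k\ge1$, and differentiating the ODE then shows that all Taylor coefficients of $V$ at $\bar r$ agree with those of $\widetilde V(r)=r^{2-2n}/\bigl(4h(\bar r)^2\bigr)$.

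The genuine gap is the closing step of the degenerate case. Agreement of the full Taylor expansion of $V$ with that of $\widetilde V$ at the single point $\bar r$ is not a contradiction: $V$ is only assumed smooth, not analytic, so it may osculate $\widetilde V$ to infinite order there. Worse, the tools you propose to ``preclude the flat scenario'' are all neutral in that scenario, so no contradiction can come from them: flatness of $h$ at $\bar r$ combined with \eqref{eq:xi'zeta-xizeta'} yields precisely $\xi'(\bar r)\zeta(\bar r)=\bar r^{1-n}/2>0$ and $\xi(\bar r)\zeta'(\bar r)=-\bar r^{1-n}/2<0$, which is perfectly consistent with $\xi$ nondecreasing, $\zeta$ nonincreasing and the positivity of the Wronskian, while $V(\bar r)=\widetilde V(\bar r)>0$ is consistent with $V\ge0$; the appeal to ``rigidity imposed globally'' is an assertion, not an argument, and no global information (the behaviour of $\xi$ at $0$ or of $\zeta$ at $1$) is actually brought to bear. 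So your proof establishes strictness only when $h''(\bar r)<0$ and leaves the infinitely degenerate case open. Note also that your route is quite different from the paper's, which makes no second-order analysis at all: it writes $F=|\partial B_1|/(\xi\zeta)$ via Lemma \ref{lemma:appendix_factorization_of_G} and argues that $\xi\zeta$ cannot be constant on an interval because of \eqref{eq:xi'zeta-xizeta'}, i.e.\ it excludes a plateau of minimum points rather than an isolated degenerate one; if you want to salvage your approach you need a genuinely new ingredient for the flat case, not the local monotonicity and Wronskian facts listed.
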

\begin{proof}
The previous lemma gives $F(r)=|\partial B_1|/[\xi(r)\zeta(r)]$, which is constant in an interval $(a,b)$ if and only if
\begin{equation}
\xi(r)=\frac{C}{\zeta(r)} \quad \text{ in } (a,b)
\end{equation}
for some positive constant $C$ (recall that $\xi$ and $\zeta$ are strictly positive). But this can not happen because of \eqref{eq:xi'zeta-xizeta'}.
\end{proof}

\subsection*{Acknowledgements}
The authors wish to thank Susanna Terracini for the fruitful discussion.
Work was partially supported by MIUR, Project “Metodi Variazionali ed Equazioni Differenziali Non Lineari.”

\end{document}